\documentclass[11pt,a4paper,reqno]{amsart}
\usepackage[T1]{fontenc}
\usepackage{lmodern,microtype}
\usepackage[margin=2cm]{geometry}
\usepackage{amsmath,amssymb,mathtools}
\usepackage{enumitem}
\usepackage{hyperref}

\numberwithin{equation}{section}
\newtheorem{theorem}{Theorem}[section]
\newtheorem{proposition}[theorem]{Proposition}
\newtheorem{lemma}[theorem]{Lemma}
\newtheorem{corollary}[theorem]{Corollary}
\theoremstyle{definition}
\newtheorem{definition}[theorem]{Definition}
\theoremstyle{remark}
\newtheorem{remark}[theorem]{Remark}

\newcommand{\R}{\mathbb R}
\newcommand{\T}{\mathbb T}
\newcommand{\dd}{\,\mathrm d}
\newcommand{\Div}{\operatorname{Div}}
\newcommand{\supp}{\operatorname{supp}}
\newcommand{\cO}{\mathcal O}
\newcommand{\cG}{\mathcal G}
\newcommand{\cM}{\mathcal M}
\newcommand{\cK}{\mathcal{K}}
\newcommand{\cP}{\mathcal{P}}
\newcommand{\cV}{\mathcal{V}}
\newcommand{\cH}{\mathcal{H}}
\newcommand{\vav}{\langle v\rangle}
\newcommand{\Tb}{\overline T}
\DeclareMathOperator*{\esssup}{ess\,sup}
\newcommand{\doi}[1]{\href{https://doi.org/#1}{doi:\nolinkurl{#1}}}
\newcommand{\arxiv}[1]{\href{https://arxiv.org/abs/#1}{arXiv:\nolinkurl{#1}}}

\allowdisplaybreaks[2]
\title[Moment Propagation for Vlasov--Poisson]{Moment Propagation for the Vlasov--Poisson System with Gyroscopic Forces, Velocity Diffusion, and Specular Reflection}
\author{Jingpeng Wu}
\address{Department of Applied Mathematics, Nanjing Forestry University, Nanjing 210000, China}
\email{jp.wu@njfu.edu.cn}
\subjclass[2020]{Primary 35Q83, 35Q84; Secondary 35A01, 35B45, 35B65}
\keywords{Vlasov--Poisson system, compensated integrability, velocity moments, specular reflection, gyroscopic force, velocity diffusion}
\date{}

\begin{document}
\begin{abstract}
This paper studies the three-dimensional Vlasov--Poisson system with gyroscopic forces and velocity diffusion. For the Cauchy problem in $\mathbb R^3$ and the specular--Dirichlet problem in bounded domains, we prove the propagation of velocity moments of order $k>2$ for weak solutions with bounded $L^1\cap L^{\infty}$ norms and kinetic energy. The proof applies Serre's compensated integrability to weighted mass--momentum tensors to derive density estimates. A finite iteration alternating moment propagation with improved density integrability then closes the estimates. These estimates provide a simplified approach to moment propagation and yield  global continuation of classical solutions.
\end{abstract}
\maketitle

\section{Introduction}\label{sec:introduction}

Let $\cO$ be $\R^3$ or a bounded $C^3$ domain $\Omega\subset\R^3$. We consider the
Vlasov--Poisson system
\begin{equation}\label{eq:kinetic}
 \begin{aligned}
 &\partial_t f+v\cdot\nabla_x f
  +\nabla_v\cdot\bigl((E+\cG-\lambda v)f\bigr)
  =\nu\Delta_v f,\\
 &-\Delta U=4\pi\rho,
  \qquad E=-\gamma\nabla U.
 \end{aligned}
\end{equation}
Here $x\in\cO$, $v\in\R^3$, and $f\ge0$ is the particle distribution, with spatial density
$\rho(t,x)=\int_{\R^3}f(t,x,v)\dd v$. The parameters satisfy $\nu,\lambda\ge0$, while $\gamma=1$ and
$\gamma=-1$ correspond to repulsion and attraction, respectively.

For $\cO=\Omega$, let $n$ denote the outward unit normal. We impose specular reflection
\begin{equation}\label{eq:reflection}
 f(t,x,v)=f(t,x,R_xv),\qquad
 R_xv=v-2(v\cdot n(x))n(x),\qquad x\in\partial\Omega.
\end{equation}
The potential satisfies
\begin{equation}\label{eq:poisson}
 \begin{split}
  \lim_{|x|\to\infty}U(t,x)=0,\qquad
       &\cO=\R^3,\\
  U|_{\partial\Omega}=0,
       \qquad&\cO=\Omega.
 \end{split}
\end{equation}
The conditions \eqref{eq:reflection} and  \eqref{eq:poisson} define the specular--Dirichlet problem. With $\vav=(1+|v|^2)^{1/2}$, the prescribed gyroscopic force satisfies
\begin{equation}\label{eq:gyro}
 v\cdot\cG(t,x,v)=0,\qquad \operatorname{div}_v\cG=0,
 \qquad |\cG(t,x,v)|\le b(t)\vav^2,
 \qquad b\in L^1_{\rm loc}([0,\infty)).
\end{equation}

\subsection{Notation}
For $r\ge0$, define
\begin{equation}\label{eq:moment-def}
  m_r(t,x)=\int_{\R^3}\vav^r f(t,x,v)\dd v,
 \qquad M_r(t)=\int_\cO m_r(t,x)\dd x.
\end{equation}

\begin{definition}[Weak solutions]\label{def:weak-solution}
	Let $0\le f_0\in L^1\cap L^{\infty}$ and
	$M_2(0)<\infty$. Let $\mathcal X^{\gamma}$ consist of weak
	solutions $f\ge 0$ of~\eqref{eq:kinetic} with initial datum $f_0$,
	potential condition~\eqref{eq:poisson}, and specular reflection~\eqref{eq:reflection}
	when $\cO=\Omega$, such that, for every $T>0$
	\begin{equation}\label{eq:base-bounds}
		\begin{gathered}
			f\in L^\infty\bigl(0,T;L^1(\cO\times\R^3)\cap L^\infty(\cO\times\R^3)\bigr),
			\qquad \esssup_{0<t<T} M_2(t)<\infty,\\
			Ef\in L^1_{\mathrm{loc}}\bigl([0,T)\times\overline{\cO}\times\R^3\bigr).
		\end{gathered}
	\end{equation}
	Explicitly, for every admissible test function $\psi\in C^{\infty}([0,T)\times\cO\times\R^3)$,
	\begin{equation}\label{eq:weak}
		\int_0^T\!\iint f\left(
		\partial_t\psi+v\cdot\nabla_x\psi
		+(E+\cG-\lambda v)\cdot\nabla_v\psi+\nu\Delta_v\psi\right)
		+\iint f_0\psi(0)=0.
	\end{equation}
	Admissible tests are compactly supported when $\cO=\R^3$. In a bounded domain they are compactly supported in $t,v$, smooth up to the
	spatial boundary and satisfy $\psi(t,x,R_xv)=\psi(t,x,v)$ on $\partial\Omega$.
\end{definition}

For classical continuation, we work on an existence interval $[0,T_*)$, with $T_*\le\infty$. Fix \(\overline T<\infty\) and let \(0<T<\min\{T_*,\overline T\}\).

The constant $C$ may depend on
$\|f_0\|_1$, $\|f_0\|_\infty$, $M_2(0)$, the fixed geometry of $\Omega$, $\nu,\lambda$, $\int_0^T b(t)\dd t$. In the proof of Theorem~\ref{thm:classical}, it may also depend on $\|\nabla\cG\|_{\infty}$ or $\sup_{0\le t\le T}\|A(t)\|$. Additional dependence on $a$ denotes $C_a$. In particular, $C_{T,\ldots}$ denotes a bound chosen
continuous in $T$ on every interval. The same convention applies to $C_{\overline{T},\ldots}$.

\subsection{Main results}

\begin{theorem}[Density and velocity moments]\label{thm:moments}
Let $f\in\mathcal X^{\gamma}$. Then
\begin{equation}\label{eq:critical-main}
 \int_0^T\|\rho(t)\|_2^2\dd t\le C_{T}.
\end{equation}
For every $k>2$ with $M_k(0)<\infty$, set $p(k)=(4k+10)/9$. Then
\begin{equation}\label{eq:moment-main}
 \esssup_{0<t<T}M_k(t)
       +\int_0^T\!\int_\cO\rho(t,x)^{p(k)}\dd x\dd t\le C_{T,k,M_k(0)}.
\end{equation}
In particular, if $M_5(0)<\infty$, then
\begin{equation}\label{eq:field-main}
 \|E\|_{L^{10/3}(0,T;L^\infty(\cO))}\le C_{T,M_5(0)}.
\end{equation}
\end{theorem}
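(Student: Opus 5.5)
We will follow the strategy announced in the abstract: compensated integrability (Serre's inequality for divergence-controlled positive semidefinite tensors) applied to weighted mass--momentum tensors, followed by a finite bootstrap between moments and density integrability.

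\textbf{Step 1: the critical estimate \eqref{eq:critical-main}.} On $(0,T)\times\cO\subset\R^{1+3}$ consider the symmetric $4\times4$ tensor
\[
 A=\int_{\R^3}\begin{pmatrix}1& v^{\top}\\ v& v\otimes v\end{pmatrix} f\dd v
 =\begin{pmatrix}\rho& j\\ j^\top & P\end{pmatrix},
\]
which is positive semidefinite. Its row-divergence is computed from \eqref{eq:weak} with tests $\psi=\phi(t,x)$ and $\psi=\phi v_i$. The mass row is divergence-free. The momentum row equals $\rho E+\int \cG f\dd v-\lambda j$, because $\Delta_v v_i=0$. Since $|\cG|\le b\vav^2$, the contribution of $\cG$ is bounded in $L^1_{t,x}$ by $\int_0^T b\,M_2$. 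The field term $\rho E$ is written as a divergence via the Maxwell stress, $\rho E=-\tfrac{\gamma}{4\pi}\Div\bigl(\nabla U\otimes\nabla U-\tfrac12|\nabla U|^2I\bigr)$. For the repulsive case $\gamma=1$ we add the energy density to the $(0,0)$ entry and the stress to $P$ (as in Serre's treatment of Euler--Poisson), keeping positivity. Alternatively, for both signs we bound $\|\rho E\|_{L^1}\le\|\rho\|_{6/5}\|E\|_{6}\lesssim\|\rho\|_{6/5}^2$, and interpolate $\|\rho\|_{5/3}\lesssim (\|f\|_\infty M_2)^{3/5}$ using the standard kinetic interpolation $\|m_r\|_{(k+3)/(r+3)}\le C\|f\|_\infty^{(k-r)/(k+3)}M_k^{(r+3)/(k+3)}$.

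Serre's inequality gives
\[
 \int_0^T\!\!\int (\det A)^{1/3}\lesssim\Bigl(\|A(0)\|_{L^1}+\|A(T)\|_{L^1}+\|\Div A\|_{\mathcal M}\Bigr)^{4/3}.
\]
The $L^\infty$ bound on $f$ gives the lower bound $\det A\gtrsim\rho^{2+5/3\cdot\ldots}$. More precisely, the scaling argument $\rho\lesssim\|f\|_\infty^{2/5}(\operatorname{tr}$ of the centered variance$)^{3/5}$ yields $(\det A)^{1/3}\gtrsim \rho^2/\|f\|_\infty^{2/3}$, which is \eqref{eq:critical-main}.

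In the bounded domain we extend $A$ by zero outside $\Omega$. Specular reflection makes the normal flux $j\cdot n$ vanish. The normal--normal entry $P n\cdot n\ge0$ produces a boundary measure of positive sign, and its total mass must be controlled, e.g.\ by testing with $\psi=v\cdot\nabla d(x)$, where $d$ is the distance function (using $C^3$ regularity). This boundary control is the first delicate point.

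\textbf{Step 2: weighted tensors and the bootstrap.} Next we replace $f$ by $\vav^{s}f$ in $A$. Testing with $\vav^s$ and $\vav^s v_i$ produces extra divergence terms. The diffusion term $\nu\Delta_v\vav^s\lesssim\vav^{s-2}$ is harmless. Friction has a good sign or is bounded by $M_{s+1}$. The $\cG$ term vanishes on $\vav^s$ since $v\cdot\cG=0$, and is bounded by $b\,M_{s+2}$ in the momentum row. The field term is bounded by $|E|\,m_{s}$. Serre's inequality then gives $\int\!\!\int (\det A_s)^{1/3}$, which bounds a space-time integral of a higher power of $m_s$ (or of $\rho$) in terms of $\sup_t M_{s+2}$ and $\int |E|m_{s+1}$.

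In parallel, the moment identity gives
\[
 \frac{d}{dt}M_k\le C M_k+ k\int |E|\,m_{k-1}.
\]
Hölder together with the interpolation inequality bounds $\int|E|m_{k-1}\le\|E\|_{L^q}\|m_{k-1}\|_{q'}$, with $\|E\|_q$ controlled by $\|\rho\|_{p}$ through Hardy--Littlewood--Sobolev. We then iterate finitely many times. Starting from $\rho\in L^2_{t,x}$, Grönwall gives $M_{k_1}$ for some $k_1>2$. This feeds into Step 2 to improve $\rho\in L^{p}_{t,x}$, which allows a larger $k$, and so on until $p(k)=(4k+10)/9$ is reached; at $k=2$ this gives $p=2$, consistent with Step 1.

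\textbf{Step 3: the field bound \eqref{eq:field-main}.} For $k=5$ we have $p=10/3>3$. We combine $\rho\in L^{10/3}_{t,x}$ with $\rho\in L^\infty_tL^1$, and the elliptic estimate $\|E\|_\infty\lesssim\|\rho\|_1^{1-\theta}\|\rho\|_{p}^{\theta}$ for $p>3$ (also valid for the Dirichlet Green function on $\Omega$). This gives $E\in L^{10/3}_tL^\infty_x$ after matching exponents. The exponent bookkeeping works out exactly: $\theta\, (10/3)\cdot$ chosen power equals $1$ when $\theta$ is chosen appropriately; otherwise we use the $L^\infty_t$ moment bound on $\rho\in L^{8/5}$.

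\textbf{Main obstacle.} The hardest step is Step 2, namely closing the exponents so that the iteration reaches exactly $p(k)$ in finitely many steps. A second difficulty is controlling the boundary measure in the bounded domain. For the boundary term we would first try the $\psi=(v\cdot\nabla d)\vav^s\chi(d)$ multiplier, which is admissible by reflection symmetry.
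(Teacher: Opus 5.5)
Your architecture is the paper's: Serre's inequality for $\vav^r$-weighted mass--momentum tensors, an $L^\infty$-based determinant bound, H\"older plus HLS for $M_k$, a finite bootstrap, and $p>3$ for $E\in L^\infty$. Two steps do not go through as written, however.

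\textbf{The bounded domain.} The multiplier $(v\cdot\nabla d)\vav^s\chi(d)$ is \emph{not} admissible. On $\partial\Omega$ one has $\nabla d=-n$ and $R_xv\cdot n=-v\cdot n$, so this function is odd under $R_x$. Definition~\ref{def:weak-solution} only gives the weak identity for reflection-invariant tests, and for those the boundary flux $\int(v\cdot n)f\psi\dd v$ vanishes identically, so they never produce the pressure $\int(v\cdot n)^2\vav^sf\dd v$ directly. Moreover, membership in $\mathcal X^\gamma$ provides no boundary trace of $f$. Hence the boundary measure created by your extension by zero is left uncontrolled.

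The paper never estimates this normal pressure. The cylinder inequality \eqref{eq:serre-cylinder} involves only the \emph{tangential} part of the normal trace. That part vanishes because $w(v)(\zeta_0+v\cdot\zeta_x)$ with $\zeta_x\cdot n=0$ on $\partial\Omega$ is reflection invariant (Lemma~\ref{lem:specular-trace}). If you want to keep the zero extension, you need exactly this tangential identity plus positivity of $A_s$. Testing $\Div(\mathbf 1_\Omega A_s)$ with $(0,g\,\chi(d/\delta)\tilde n)$, where $\tilde n=-\nabla d$, and letting $\delta\to0$ shows that the boundary part depends only on $g|_{\partial\Omega}$. It is the limit of $-\delta^{-1}|\chi'(d/\delta)|\int\vav^s(v\cdot\tilde n)^2f\dd v$, hence a nonpositive measure. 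Its mass is bounded by $C_\Omega\int_0^TM_{s+2}$ plus the time-trace and source terms. Your proposal contains neither argument.

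\textbf{The exponent $p(k)$.} This exponent is never actually derived. Your bound $\rho\lesssim\|f\|_\infty^{2/5}\bigl(\int|v-u|^2f\dd v\bigr)^{3/5}$ is just the $L^{5/3}$ interpolation. A lower bound on $\operatorname{tr}\Sigma$ does not bound $\det\Sigma$ from below. What you need is a volume argument. By Chebyshev, half the mass lies in $\{(v-u)^T\Sigma^{-1}(v-u)\le6\}$, whose volume is comparable to $\sqrt{\det\Sigma}$. Hence $\rho\lesssim\|f\|_\infty\sqrt{\det\Sigma}$ and $\det A_0=\rho^4\det\Sigma\gtrsim\|f\|_\infty^{-2}\rho^6$.

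For the weighted tensor, which you also omit, choose $R^3\sim\rho/\|f\|_\infty$ so that $f\mathbf 1_{|v|\ge R}$ keeps half the mass. Then $A_r\ge R^rA_0[f\mathbf 1_{|v|\ge R}]$ gives $(\det A_r)^{1/3}\gtrsim\rho^{2+4r/9}$, and $r=k-2$ yields $(4k+10)/9$.

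With this and $s(k)=3(k+3)/(k+6)$, the exponent for which $\int|E|m_{k-1}\lesssim\|\rho\|_{s(k)}M_k^{(k+2)/(k+3)}$, the iteration you call the main obstacle is short:
\begin{itemize}
\item $s(k)\le2$ for $k\le3$;
\item $s(5)=24/11<22/9=p(3)$;
\item $p(5)=10/3>3>s(k)$ for every $k$.
\end{itemize}
So $\rho\in L^2\Rightarrow M_3\Rightarrow\rho\in L^{22/9}\Rightarrow M_5\Rightarrow\rho\in L^{10/3}$. After that, every $M_k$ propagates. For $k>5$, the tensor source $\int|E|m_{k-2}\le\|E\|_\infty M_{k-2}$ is handled by $E\in L^1_tL^\infty_x$. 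Your Step~3 then gives \eqref{eq:field-main}.
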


\begin{theorem}[Classical solutions in the whole space]\label{thm:classical}
Let $\cO=\R^3$, $\lambda\ge0$ and $\gamma\in\{-1,1\}$.
\begin{enumerate}[label=\textup{(\roman*)},leftmargin=2em]
\item Suppose $\nu=0$, and $\cG$ satisfies~\eqref{eq:gyro}. Assume that $\cG$ is $C^1$,
with first derivatives bounded on compact subsets of phase space over finite time intervals. Every
nonnegative $f_0\in C_c^1(\R^3\times\R^3)$ generates a unique global classical solution $f$ of \eqref{eq:kinetic}. For every $T>0$, $f\in C_c^1([0,T]\times\R^3\times\R^3)$.
\item Suppose $\nu>0$, and $\cG(t,v)=A(t)v$, where $A(t)$ is a smooth skew-symmetric
matrix. Nonnegative Schwartz data on $\R^6$ generate a unique global classical solution. All polynomial
velocity moments and all polynomially weighted derivatives in $(x,v)$ remain bounded on finite time
intervals.
\end{enumerate}
\end{theorem}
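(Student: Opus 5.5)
The plan is to deduce Theorem~\ref{thm:classical} from the a priori bounds of Theorem~\ref{thm:moments} through a standard continuation criterion. Local well-posedness is classical in both settings. For (i), with $\nu=0$, one solves by characteristics: $\cG\in C^1$ and $E$ is Lipschitz whenever $\rho\in L^\infty\cap L^1$ with $\nabla\rho$ bounded. For (ii), with $\nu>0$, one uses a parabolic/kinetic fixed point in weighted Sobolev spaces. Either way we obtain a maximal time $T_*$ together with the criterion that $T_*<\infty$ forces $\int_0^{T}\|E(t)\|_\infty\dd t$ (or $\|\rho\|_\infty$) to blow up as $T\to T_*$. Classical solutions belong to $\mathcal X^\gamma$: the $L^p$ norms are conserved or decay since $\operatorname{div}_v(\cG-\lambda v)=-3\lambda\le0$, and energy dissipation bounds $M_2$. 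For $C_c^1$ or Schwartz data we have $M_5(0)<\infty$, so \eqref{eq:field-main} gives $\|E\|_{L^{10/3}(0,T;L^\infty)}\le C_{\overline T,M_5(0)}$, uniformly for $T<\min\{T_*,\overline T\}$.

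For (i), the characteristic ODE gives
\[
\frac{d}{dt}\vav\le |E|+\lambda\vav ,
\]
since $v\cdot\cG=0$. Hence the velocity support radius $P(t)$ satisfies $P(t)\le C\bigl(P(0)+\int_0^t\|E\|_\infty\bigr)$, which stays bounded on $[0,T_*)$ when $T_*<\infty$. The bound on $\|f\|_\infty$ then gives $\|\rho\|_\infty\le CP^3$. Next we bound $\nabla E$ using the log-Lipschitz estimate
\[
\|\nabla E\|_\infty\le C\bigl(1+\|\rho\|_\infty(1+\log_+\|\nabla\rho\|_\infty)\bigr).
\]
Together with Gronwall on $\|\nabla_{x,v}f\|_\infty$ along characteristics, this is where the compact-set bounds on $\nabla\cG$ enter, since the support is now bounded. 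It yields a $\log$-Gronwall inequality and hence bounds on the $C^1$ norm. This contradicts the blow-up criterion, so $T_*=\infty$. Uniqueness follows from the Lipschitz field by a standard Loeper- or Gronwall-type stability argument.

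For (ii), compact support is lost, so we work with moments instead. Since all moments are finite initially, Theorem~\ref{thm:moments} gives $\sup_t M_k(t)\le C$ for every $k$. The key step is an $L^\infty$ bound on $\rho$. I would obtain it from a maximum-principle bound on $\vav^{N}f$: the weight $\vav^N$ is compatible with $\cG=A(t)v$ because $v\cdot Av=0$, and $E\in L^1_tL^\infty_x$ controls the commutator. Concretely, $g=\vav^Nf$ satisfies a Fokker--Planck equation with zeroth-order coefficient bounded by $C(1+N|E|+N^2\nu)$, so $\|g(t)\|_\infty\le \|g(0)\|_\infty\exp\bigl(C\int_0^t(1+\|E\|_\infty)\bigr)$. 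With $N>3$ this gives $\|\rho\|_\infty\le C$. Then $\nabla E$ is log-Lipschitz controlled as before. Derivatives $\partial_x^\alpha\partial_v^\beta f$ with polynomial weights are propagated by energy estimates: $\nu\Delta_v$ gives dissipation, the commutator with $v\cdot\nabla_x$ shifts one $x$-derivative to a $v$-derivative, and the linear structure of $Av$ and $\lambda v$ produces only bounded, non-growing weights. The field terms require $\|\nabla^jE\|_\infty$, which are obtained inductively from the derivatives of $\rho$ via weighted $L^2$ bounds and Sobolev embedding. Closing this induction on derivative order with Gronwall gives finite bounds on every $[0,T]$, hence global existence and uniqueness in the Schwartz class.

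The main obstacle I expect is the step $E\in L^1_tL^\infty_x\Rightarrow \|\rho\|_\infty$ bounded in case (ii). The weighted maximum principle must be justified for the diffusive equation on the whole space, which would be done by truncating the weight. A further difficulty is that the higher-derivative induction has to avoid needing $\|\nabla E\|_\infty$ before $\|\nabla\rho\|$ is controlled; I would break this loop with the log-Lipschitz estimate.
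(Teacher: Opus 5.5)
Your argument is correct, but it departs from the paper's at two points.

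\textbf{Part (i).} You bound the velocity support through $\|E\|_{L^{10/3}(0,T;L^\infty)}$ from \eqref{eq:field-main}. That bound rests on the whole iteration $\rho\in L^2\Rightarrow M_3\Rightarrow\rho\in L^{22/9}\Rightarrow M_5\Rightarrow\rho\in L^{10/3}$. The paper uses only the $r=0$ tensor bound of Corollary~\ref{cor:critical}. For compactly supported solutions $\|\rho(t)\|_\infty\le C\cV(t)^3$. Hence $\|E\|_\infty\le C(\|f_0\|_1+\|\rho\|_2^{2/3}\|\rho\|_\infty^{1/3})$ is linear in $\cV$, with the time-integrable coefficient $\|\rho\|_2^{2/3}$. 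Gr\"onwall then gives \eqref{eq:support} with no moment propagation at all. Your route buys uniformity, since (i) and (ii) both pass through $E\in L^1_tL^\infty_x$. The paper's route shows that without diffusion the lowest-order compensated-integrability estimate already suffices.

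\textbf{Part (ii).} Your a priori chain is the paper's: fifth moment, $E\in L^1_tL^\infty_x$, the maximum principle for $\vav^N f$ with $N>3$, and bounded $\rho$. The difference is how you propagate derivatives.
\begin{itemize}
\item You use weighted $L^2$ energy estimates.
\item The paper applies the maximum principle to the differentiated equation in weighted uniform norms. It closes with $D^+Z\le CZ\log Z$ and logarithmic bounds on $D_x^jE$, and it does not exploit the dissipation.
\end{itemize}
In your $L^2$ setting, the log-Lipschitz bound you invoke involves $\|\nabla\rho\|_\infty$, which is not an $L^2$ quantity, so it does not plug in directly. It is simpler to integrate each term $\partial_x^{\alpha'}E\cdot\nabla_v\partial_x^{\alpha-\alpha'}\partial_v^\beta f$ by parts in $v$ against the dissipation $\nu\|\vav^N\nabla_v D^\beta f\|_2^2$. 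At first order only $\|E\|_\infty$, $\|\nabla E\|_2\le C\|\rho\|_2$, and the weighted uniform bound on $f$ then enter. Higher orders close by induction with lower-order products, and Sobolev embedding converts the result to the weighted uniform norms in the statement.

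\textbf{Two small corrections.}
\begin{itemize}
\item Friction makes $f$ grow like $e^{3\lambda t}$ along characteristics. The $L^p$ norms with $p>1$ are therefore bounded on finite intervals, not decaying.
\item To stay in the Schwartz class you must also propagate spatial decay. A purely spatial weight does not close by itself, because $v\cdot\nabla_x\log\langle x\rangle^N$ is unbounded in $v$. The paper's joint weight $(1+|x|^2+|v|^2)^N$ keeps this term bounded and still satisfies $A(t)v\cdot\nabla_v w=0$.
\end{itemize}
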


\begin{theorem}[Classical solutions for the specular--Dirichlet problem]\label{thm:boundary}
Let $\Omega$ be a bounded $C^3$ convex domain, and let
$\gamma=1$, $\nu=\lambda=0$, and $\cG=0$. Suppose that $f_0\ge0$ belongs to the $C^{1,\alpha}$
specular class in~\cite[Theorem 2.1]{Cesbron}, for some $0<\alpha<1$. Then the specular--Dirichlet
problem has a unique global classical solution in that class. Its velocity support is bounded on finite intervals.
\end{theorem}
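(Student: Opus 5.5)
The proof combines the moment bounds of Theorem~\ref{thm:moments} with the local existence and continuation theory of Cesbron~\cite{Cesbron}. That theory gives, for $f_0$ in the $C^{1,\alpha}$ specular class on a bounded convex $C^3$ domain, a unique local classical solution on $[0,T_*)$. We assume its continuation criterion has the following form: if $T_*<\infty$, then the velocity support
\[
P(t)=\sup\{|v|:(x,v)\in\supp f(s),\ s\le t\}
\]
blows up as $t\uparrow T_*$, or some norm of $E$ does. Uniqueness comes directly from the local theory in that class. So the task is to show that $P(t)$ stays bounded on every finite interval $[0,T]$ with $T<\min\{T_*,\overline T\}$. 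Since $f_0$ is $C^{1,\alpha}$ with compact velocity support, we have $M_k(0)<\infty$ for every $k$. The classical solution also lies in $\mathcal X^{1}$: $L^p$ norms are conserved along the measure-preserving specular flow, and with $\gamma=1$, $\nu=\lambda=0$, $\cG=0$ the energy
\[
\tfrac12 M_2+\tfrac{1}{8\pi}\|\nabla U\|_2^2
\]
is conserved. Hence Theorem~\ref{thm:moments} applies, and \eqref{eq:field-main} gives $\|E\|_{L^{10/3}(0,T;L^\infty(\Omega))}\le C_{\overline T,M_5(0)}$.

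Next we control the velocity support along characteristics. Away from the boundary, $\dot V=E(t,X)$. At a specular bounce, $V\mapsto R_XV$ preserves $|V|$. Integrating along a characteristic (the bounces are countable, and grazing sets are handled by the local theory) gives
\[
|V(t)|\le |V(0)|+\int_0^t\|E(s)\|_\infty\dd s\le P(0)+T^{7/10}\|E\|_{L^{10/3}(0,T;L^\infty)}.
\]
Thus $P(t)\le C_{\overline T}$ uniformly on $[0,T]$. Because this bound does not degenerate as $T\uparrow T_*$, we get $\sup_{t<T_*}P(t)<\infty$ if $T_*<\infty$.

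With $P$ bounded, $\rho\le C P^3\|f_0\|_\infty$ stays bounded. Elliptic regularity for the Dirichlet problem on the $C^3$ domain then gives $E\in L^\infty_tC^{0,\beta}$, and then $E\in L^\infty_tC^{1,\alpha}$ after bootstrapping through the $C^{1,\alpha}$ regularity of $f$. This is exactly the quantity the local theory needs to continue, so $T_*=\infty$.

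The main obstacle is matching the continuation criterion of~\cite{Cesbron} precisely. That criterion may ask for control of derivatives of $f$ near the grazing set, or for Lipschitz bounds on $E$, not only for the velocity support. If so, I would first check that~\cite{Cesbron} states continuation under bounded velocity support together with a bounded density; convexity of $\Omega$ enters there through the regularity of the billiard flow. If it does not, I would supplement it with a log-Lipschitz estimate on $E$ from $\rho\in L^\infty$ and propagate $C^{1,\alpha}$ regularity along the reflected characteristics using the local theory's estimates.
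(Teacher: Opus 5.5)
There is a genuine gap in the continuation step. Your velocity-support bound is fine, though it takes a heavier route than the paper. You invoke \eqref{eq:field-main}, which needs $M_5(0)<\infty$ and the whole moment iteration. The paper uses only the $L^2$ density bound of Corollary~\ref{cor:critical}, the interpolation $\|E\|_\infty\le C(\|f_0\|_1+\|\rho\|_2^{2/3}\|\rho\|_\infty^{1/3})$, the bound $\rho\le C\cV^3$, and Gr\"onwall, which gives \eqref{eq:support}. Your bound is additive rather than exponential; either suffices.

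In the continuation step you assume that the local theory of \cite{Cesbron} restarts once $E$ is controlled in $C^{1,\alpha}$. That theory works in a class where the self-consistent field points outward at the wall, $E\cdot n=-\partial_nU>0$ on $\partial\Omega$, and flatness near grazing is transported. A \emph{common} restart time therefore needs a lower bound on this normal force that is uniform in $t$. Nothing in your argument prevents $\inf_{\partial\Omega}(-\partial_nU(t))$ from degenerating as $t\uparrow T_*$, and bounded velocity support plus elliptic regularity of $E$ does not rule this out. Your phrase ``exactly the quantity the local theory needs'' is the unverified step. Your bootstrap to $E\in L^\infty_tC^{1,\alpha}$ ``through the $C^{1,\alpha}$ regularity of $f$'' is also circular as written, since that norm is what must be controlled up to $T_*$. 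The paper takes the classical-norm and flatness estimates under a velocity bound from \cite[Sections 4.1--4.2]{Cesbron}.

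The missing idea is the Hopf-lemma argument made quantitative. With $\gamma=1$, $-\Delta U=4\pi\rho\ge0$ and $U|_{\partial\Omega}=0$, so $U\ge0$ and Hopf gives $-\partial_nU>0$. To make this uniform, note that the velocity-support bound and $\|f\|_\infty\le\|f_0\|_\infty$ give $\sup_{t<T}\|\rho(t)\|_\infty\le C_{\Tb}$. Hence a boundary layer $\{d(x,\partial\Omega)<\delta\}$, with $\delta$ chosen so that $C_{\Tb}$ times its volume is less than $\tfrac12\|f_0\|_1$, carries less than half of the conserved mass. On the complementary compact interior set $K_\delta$, the Poisson kernel of $\Omega$ (the normal derivative of the Dirichlet Green function) is bounded below by some $c_\delta>0$ on $\partial\Omega\times K_\delta$. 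Integrating against $\rho$ gives $-\partial_nU\ge\tfrac12c_\delta\|f_0\|_1$ on $\partial\Omega$, uniformly for $t<\min\{T_*,\Tb\}$. This lets the local construction restart with a common lifespan, keeps the solution in the admissible class, and excludes $T_*<\infty$. You need to add this step.
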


\begin{remark}[Other geometries and boundary conditions]\label{rem:other-settings}
	The density and moment estimates also extend to the periodic problem on $\T^3$, to the
	specular--Neumann and absorbing--Dirichlet problems. Absorbing--Neumann problems additionally require compatible charge balance and
	control of boundary energy transfer. For brevity, we develop only the Cauchy and specular--Dirichlet
	problems here.
\end{remark}

\subsection{Historical context}\label{subsec:history}
The three-dimensional global classical theory of the Vlasov--Poisson system rests on two approaches: the characteristic estimates of
Pfaffelmoser ~\cite{Pfaffelmoser} and Schaeffer's simplified version~\cite{Schaeffer}, and the velocity-moment estimates of Lions and
Perthame~\cite{LP}. Pallard subsequently obtained further moment propagation results~\cite{Pallard}.

For equations with velocity diffusion, Perthame developed higher-moment estimates~\cite{Perthame90}, and
Bouchut established the three-dimensional global smooth theory of
Vlasov--Poisson--Fokker--Planck~\cite{Bouchut93,Bouchut95}. Pulvirenti and Simeoni subsequently obtained
density estimates by a probabilistic
argument~\cite{PulvirentiSimeoni}. Prescribed magnetic fields introduce a different difficulty. Rege studied moment propagation for uniform and
time-dependent magnetic fields~\cite{Rege21,Rege23}. For spatially varying fields, Ben Porat, Gagnebin,
Iacobelli, and Junn\'e considered the two-dimensional Poisson and three-dimensional screened Poisson
equations~\cite{BGIJ}. Here we consider the three-dimensional unscreened Poisson
equation.

In bounded convex domains, Hwang and Vel\'azquez, Hwang, Jung, and Vel\'azquez, and Cesbron and Iacobelli
constructed classical solutions for specular--Dirichlet and specular--Neumann problems~\cite{HV,HJV,Cesbron}. Fern\'andez-Real
studied their weak formulation and Lagrangian structure~\cite{FR}. The corresponding moment propagation results were not established in these works.

Serre's compensated integrability inequalities apply to positive symmetric tensors with measure-valued divergence~\cite{Serre18,Serre19,SerreBoundary}. Recent kinetic applications include Wang's work on
energy conservation and moment propagation for Vlasov--Poisson~\cite{Wang} and Katriadakis's global
solvability results for higher-dimensional Vlasov equations with power-law interactions~\cite{Kat}. Here
radial velocity weights allow the same tensor estimate to accommodate gyroscopic forces, velocity
diffusion, and specular reflection.

\subsection{Proof strategy}\label{subsec:strategy}
The proof is based on the radially weighted mass and momentum tensor
\begin{equation}\label{eq:intro-tensor}
 A_r(t,x)=\int_{\R^3}\vav^r
              \binom{1}{v}\otimes\binom{1}{v} f(t,x,v)\dd v,
 \qquad r\ge0.
\end{equation}
A lower bound for its determinant converts compensated integrability into an estimate for a higher power
of $\rho$. The tensor and its source involve velocity moments through order $r+2$: the gyroscopic force
annihilates the radial weight, while the velocity Laplacian lowers its order. Specular reflection also
preserves the weight and cancels the tangential trace in Serre's cylinder inequality. Consequently, the
same weighted estimate holds in the whole space and in the bounded domain. At $r=0$, the $\|f\|_{1}$, $\|f\|_{\infty}$ and kinetic energy bounds control the electric source and give $\rho\in L^2_{t,x}$.

Starting from this $L^2$ estimate, we alternate moment propagation with density estimates derived from weighted tensors.
If $M_5(0)<\infty$, the weights $1$, $\vav$, and $\vav^3$ give the finite sequence
\begin{equation}\label{eq:intro-iteration}
 \rho\in L^2_{t,x}
 \ \Longrightarrow\ M_3
 \ \Longrightarrow\ \rho\in L^{22/9}_{t,x}
 \ \Longrightarrow\ M_5
 \ \Longrightarrow\ \rho\in L^{10/3}_{t,x}.
\end{equation}
Since $10/3>3$, the final density bound yields $E\in L^1_tL^\infty_x$, which propagates every higher
moment that is initially finite. For target orders below five, the iteration stops at that order and
uses only lower initial moments. The exponent $p(k)$ in Theorem~\ref{thm:moments} is the density gain
produced by the weight $\vav^{k-2}$.

For classical solutions without diffusion, the initial $L^2$ density estimate already suffices to
control the velocity support. Indeed, the whole-space Coulomb estimate is
\begin{equation}\label{eq:intro-field}
 \|E(t)\|_\infty
 \le C\|\rho(t)\|_2^{2/3}\|\rho(t)\|_\infty^{1/3}.
\end{equation}
If $f$ is a classical solution supported in $|v|\le\cV(t)$, then
$\|\rho(t)\|_\infty\le C_{\Tb}\cV(t)^3$. Thus the right-hand side is bounded by
$C_{\Tb}\|\rho(t)\|_2^{2/3}\cV(t)$. In the bounded domain, the regular part of
the kernel adds a mass term; see~\eqref{eq:field-split}. Since $\|\rho\|_2^{2/3}$ is integrable in
time, Gr\"onwall's inequality bounds $\cV$. The same argument applies along free and reflected
trajectories, because the gyroscopic force does no work and reflection preserves speed. The local
regularity theory then gives classical continuation.

With velocity diffusion, weighted uniform estimates replace compact velocity support. The fifth-moment
estimate first gives $E\in L^1_tL^\infty_x$; a weighted maximum principle then yields polynomial
velocity decay and a uniform bound for $\rho$. Finally, logarithmic Poisson estimates and the
differentiated kinetic equation control the classical norms. In the diffusive classical theorem, the
drift $A(t)v$ allows differentiation at a fixed velocity weight, whereas the moment theorem applies to
the more general forces in~\eqref{eq:gyro}.

\subsection{Organization}\label{subsec:organization}
Section~\ref{sec:density} proves the low-order bounds and the density estimate obtained from the weighted tensor.
Section~\ref{sec:moments} combines them to prove Theorem~\ref{thm:moments}. Section~\ref{sec:classical}
then establishes the classical results for the Cauchy problem and the specular--Dirichlet problem.

\subsection*{Acknowledgements}
The research was supported by the National Natural Science Foundation of China (Grant No.\ 12501282).

\subsection*{Declaration of AI Use}
During the preparation of this manuscript, the author used OpenAI's ChatGPT (Sol 5.6 and Astra 6) to assist with mathematical discussions, preliminary examination of arguments, language editing, and LaTeX formatting. The author takes full responsibility for the mathematical correctness, originality, and final content of the manuscript.

\section{Density estimates}\label{sec:density}

\subsection{Energy and the first moments}
We begin with the low-order estimates needed to control the tensor divergence. 

For both potential problems in~\eqref{eq:poisson}, the Hardy--Littlewood--Sobolev inequality or elliptic
regularity gives
\begin{equation}\label{eq:poisson-estimates}
 \|E\|_{3s/(3-s)}\le C_s\|\rho\|_s,\qquad1<s<3.
\end{equation}
The corresponding Green function satisfies
\begin{equation}\label{eq:green}
 |\nabla_x G(x,y)|\le C\bigl(1+|x-y|^{-2}\bigr).
\end{equation}
On $\R^3$, the bounded remainder can be omitted. We refer to~\cite{LiebLoss,GT} for these
potential and elliptic estimates.

For a classical solution, define the kinetic and potential energies by
\begin{equation}\label{eq:energy}
 \cK(t)=\frac12\iint |v|^2f,\qquad
 \cP(t)=\frac1{8\pi}\int_\cO|\nabla U|^2
     =\frac12\int_\cO\rho U,\qquad
 \cH_\gamma=\cK+\gamma\cP.
\end{equation}
The identity for $\cP$ follows by integration by parts, using the decay of $U$ or the homogeneous
Dirichlet condition. In particular, $\cP\ge0$.

We recall the following two standard estimates, see e.g.~\cite{LP,Bouchut93}.
\begin{lemma}\label{lem:energy}
Let $f\in\mathcal{X}^{\gamma}$. Then
\begin{equation}\label{eq:low-bounds}
 \begin{gathered}
  \esssup_{t<T}\|f(t)\|_1\le\|f_0\|_1,\qquad \esssup_{t<T}\|f(t)\|_\infty\le e^{3\lambda T}\|f_0\|_\infty,\\
  \esssup_{t<T}\bigl(\cK(t)+\cP(t)\bigr)\le C_{\Tb}
 \end{gathered}
\end{equation}

\end{lemma}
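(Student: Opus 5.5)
The plan is to test the weak formulation~\eqref{eq:weak} against functions of $v$ alone (after a spatial and velocity cutoff, removed at the end), and to use the structural identities $v\cdot\cG=0$ and $\operatorname{div}_v\cG=0$.

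\emph{Mass.} Take $\psi=\chi_R(x)\chi_R(v)\eta(t)$. When $\cO=\Omega$, take $\psi=\chi_R(v)\eta(t)$; it is radial in $v$, so it is admissible under~\eqref{eq:reflection}. Let $R\to\infty$. The error terms are $\iint f\,|E+\cG-\lambda v|\,|\nabla\chi_R|$ and $\nu\iint f|\Delta\chi_R|$. They vanish by dominated convergence, because $Ef\in L^1_{\rm loc}$, $|\cG|\le b\vav^2$, $M_2<\infty$ and $f\in L^1$. This gives $\|f(t)\|_1=\|f_0\|_1$, and in particular the stated inequality.

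\emph{$L^\infty$ bound.} Formally, along the characteristics of the field $(v,E+\cG-\lambda v)$, the phase-space divergence is $-3\lambda$, because $\operatorname{div}_v\cG=0$ and $\operatorname{div}_v E=0$. Hence $f$ grows at most like $e^{3\lambda t}$. The Laplacian $\nu\Delta_v$ only lowers maxima, and specular reflection maps characteristics to characteristics while preserving the value of $f$. For a weak solution I would make this rigorous by convex renormalization. For $\beta(s)=(s-e^{3\lambda t}\|f_0\|_\infty)_+$, the quantity $\iint\beta(f)$ is nonincreasing, and the term $\nu\iint\beta''(f)|\nabla_vf|^2\ge0$ has the favourable sign. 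Alternatively, one can take the $L^p$ estimate
\[
\frac{\mathrm d}{\mathrm dt}\|f\|_p^p\le 3\lambda(p-1)\|f\|_p^p
\]
and let $p\to\infty$, which gives exactly $e^{3\lambda T}\|f_0\|_\infty$. Justifying this requires regularizing (DiPerna--Lions type). I expect this to be the main technical obstacle at the weak level. I would state it under the standard convention that elements of $\mathcal X^\gamma$ are limits of regularized solutions, or are renormalized solutions, as in~\cite{LP,Bouchut93}.

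\emph{Energy.} Test with $\tfrac12|v|^2$. The gyroscopic term contributes $\cG\cdot v=0$, friction contributes $-2\lambda\cK\le0$, and diffusion contributes $3\nu\|f\|_1$. The field term is $\iint f\,E\cdot v=\int E\cdot j$. Using the continuity equation and Poisson's equation, in the classical case this equals $-\gamma\,\frac{\mathrm d}{\mathrm dt}\cP$. In the bounded domain, the boundary flux $\int_{\partial\Omega}U\,j\cdot n$ vanishes for two reasons: $U=0$ on $\partial\Omega$, and specular reflection gives $j\cdot n=0$. So
\[
\cH_\gamma(t)\le\cH_\gamma(0)+3\nu\|f_0\|_1t .
\]
For weak solutions this is obtained as an inequality, by lower semicontinuity from the approximating scheme or by assumption on $\mathcal X^\gamma$.

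If $\gamma=1$, we get $\cK+\cP\le C_{\Tb}$ directly. If $\gamma=-1$, we need the standard bound on the potential energy. Interpolation gives
\[
\|\rho\|_{5/3}\le C\|f\|_\infty^{2/5}\cK^{3/5},
\]
and HLS (or~\eqref{eq:poisson-estimates} with $s=6/5$) gives
\[
\cP\le C\|\rho\|_{6/5}^2\le C\|\rho\|_1^{7/6}\|\rho\|_{5/3}^{5/6}\le C\cK^{1/2}.
\]
The last step uses the interpolation $\|\rho\|_{6/5}\le\|\rho\|_1^{7/12}\|\rho\|_{5/3}^{5/12}$. Hence $\cK\le\cH_{-1}+C\cK^{1/2}$, which bounds $\cK$ and then $\cP$ by $C_{\Tb}$. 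The constant depends only on $\|f_0\|_1$, $\|f_0\|_\infty$, $M_2(0)$, $\nu$, $\lambda$ and $T$.
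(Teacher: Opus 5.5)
Your argument is the standard one that the paper relies on without proof (the paper simply recalls these bounds from Lions--Perthame and Bouchut), and your computations are correct: the growth rate $3\lambda(p-1)$, the balance $\cH_\gamma(t)\le\cH_\gamma(0)+3\nu\|f_0\|_1t$ with vanishing boundary flux, and $\cP\le C\|\rho\|_1^{7/6}\|\rho\|_{5/3}^{5/6}\le C\cK^{1/2}$ for $\gamma=-1$. The weak-level justification you flag for the $L^\infty$ bound and the energy inequality is left equally open by the paper's citation, and it is not a formality, since \eqref{eq:gyro} gives $\cG$ no Sobolev regularity, so DiPerna--Lions renormalization is not automatic; building it into the notion of solution, as you propose, is the right way to state the lemma, and one small fix is needed in the mass step, where the electric cutoff error should be dominated by $R^{-1}\int_0^T\!\int_\cO\rho|E|$, which is finite by \eqref{eq:base-bounds}, \eqref{eq:kinetic-interpolation} and \eqref{eq:poisson-estimates}, because $Ef\in L^1_{\rm loc}$ alone does not make it vanish.
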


\begin{lemma}\label{lem:kinetic-interpolation}
	Let $f\in\mathcal{X}^{\gamma}$. Then
\begin{equation}\label{eq:kinetic-interpolation}
	\| m_r\|_{(k+3)/(r+3)}
	\le C_{k,r}\|f(t)\|_\infty^{(k-r)/(k+3)}M_k^{(r+3)/(k+3)},
	\qquad 0\le r<k,
\end{equation}
\end{lemma}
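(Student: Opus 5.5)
The plan is the classical Lions--Perthame splitting of the velocity integral at a radius $R$, optimized pointwise in $x$. Fix $t$ and $x$, and write $\|f\|_\infty=\|f(t)\|_\infty$. For $R\ge1$ we split
\[
 m_r(t,x)=\int_{|v|\le R}\vav^r f\dd v+\int_{|v|>R}\vav^r f\dd v .
\]
The first part is bounded by $\|f\|_\infty\int_{|v|\le R}\vav^r\dd v\le C_r\|f\|_\infty R^{r+3}$, using $\vav\le\sqrt2 R$ on $|v|\le R$. For the second part we use $\vav^r\le \vav^k\,|v|^{r-k}$, valid for $|v|>R\ge1$ since $\vav\ge|v|$ and $r<k$. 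This gives the bound $R^{r-k}m_k(t,x)$.

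Hence
\[
 m_r(t,x)\le C_r\|f\|_\infty R^{r+3}+R^{r-k}m_k(t,x).
\]
We choose $R^{k+3}=m_k(t,x)/\|f\|_\infty$, which balances the two terms, provided this choice gives $R\ge1$. The result is
\[
 m_r(t,x)\le C_{k,r}\|f\|_\infty^{(k-r)/(k+3)}m_k(t,x)^{(r+3)/(k+3)} .
\]

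The main obstacle is the case $R<1$, i.e.\ $m_k<\|f\|_\infty$. In this case we avoid the split: since $\vav\ge1$ and $r<k$, we have $m_r\le m_k$. Moreover $m_k=m_k^{(r+3)/(k+3)}m_k^{(k-r)/(k+3)}\le m_k^{(r+3)/(k+3)}\|f\|_\infty^{(k-r)/(k+3)}$, so the same pointwise bound holds in all cases. (The degenerate case $\|f\|_\infty=0$ is trivial.)

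Finally, we raise the pointwise bound to the power $(k+3)/(r+3)$ and integrate over $\cO$:
\[
 \int_\cO m_r^{(k+3)/(r+3)}\dd x\le C\|f\|_\infty^{(k-r)/(r+3)}M_k .
\]
Taking the $(r+3)/(k+3)$-th root yields \eqref{eq:kinetic-interpolation}. The argument is pointwise in $(t,x)$ for a.e.\ $t$, so it needs no structure of the equation. For $f\in\mathcal X^\gamma$ only measurability and nonnegativity of $f(t)$ are used, and the right side may be infinite.
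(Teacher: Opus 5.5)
Your proof is correct, and it is the standard Lions--Perthame velocity splitting at the optimized radius $R=(m_k/\|f(t)\|_\infty)^{1/(k+3)}$. The paper gives no proof of its own for this lemma and only cites this standard estimate (Lions--Perthame, Bouchut), so your argument matches the intended one; your separate case $R<1$, handled via $m_r\le m_k$, correctly covers the one place where the bound $\vav\le\sqrt2\,R$ would fail.
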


\begin{remark}\label{rem:preliminary-force}
By Lemma~\ref{lem:energy}, we have also
\begin{equation}\label{eq:first-source}
 \esssup_{t<T}\left(\|m_1(t)\|_1+\|E(t)\|_2
          +\int_\cO\rho(t,x)|E(t,x)|\dd x\right)\le C_{T}.
\end{equation}
In addition,~\eqref{eq:gyro} gives
\begin{equation}\label{eq:gyro-low}
 \iint|\cG f|\dd v\dd x\le b(t)M_2(t).
\end{equation}
\end{remark}

\subsection{The tensor and the boundary trace}\label{subsec:tensor-trace}
Let $S$ be an integrable symmetric positive semidefinite tensor on $\R^4$, and suppose that its row
divergence $\Div S$ is a finite vector-valued Radon measure. Serre's inequality states that
\begin{equation}\label{eq:serre-whole}
 \int_{\R^4}(\det S)^{1/3}\le C\|\Div S\|_{\cM}^{4/3},
\end{equation}
where $\|\cdot\|_{\cM}$ denotes total variation. The compactly supported version is~\cite[Theorem
2.2]{Serre19}; the integrable case follows by spatial cutoffs. On the cylinder $\R\times\Omega$, we use
\begin{equation}\label{eq:serre-cylinder}
 \int_{\R\times\Omega}(\det S)^{1/3}
 \le C\left(
 \|\Div S\|_{\cM}+\|\gamma_N^\tau S\|_{\cM}+\|S\|_1
 \right)^{4/3}.
\end{equation}
Here $N=(0,n)$ is the lateral unit normal and $\gamma_N^\tau S$ is the tangential component of the
normal trace. This form follows from~\cite[Theorem 1.2]{SerreBoundary} by bounding its geometric column
norm by $C\|S\|_1$.

\begin{lemma}[Vanishing tangential trace]\label{lem:specular-trace}
Let $f\in\mathcal X^\gamma$ with $\cO=\Omega$, and let
$w\in C_c^\infty(\R^3)$ be radial. Set
$A_w=\int w(1,v)\otimes(1,v)f\dd v$. Then
\begin{equation}\label{eq:trace-zero}
 \gamma_N^\tau A_w=0\qquad\text{on }(0,T)\times\partial\Omega.
\end{equation}
\end{lemma}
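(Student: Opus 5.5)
We identify the normal trace of $A_w$ on the lateral boundary by testing the weak formulation \eqref{eq:weak}, and then show that the specular symmetry forces its tangential part to vanish. In $(t,x)$-coordinates, the row divergence of $A_w$ is the time–space divergence of the vector fields $\bigl(\int wf,\int wvf\bigr)$ and $\bigl(\int wv_if,\int wv_iv f\bigr)$ for $i=1,2,3$. Its normal component along $N=(0,n)$ is therefore
\[
\gamma_N A_w=\Bigl(\int w\,(v\cdot n)f\dd v,\ \int w\,v\,(v\cdot n)f\dd v\Bigr),
\]
and $\gamma_N^\tau A_w$ is the part of this vector of $\R^4$ orthogonal to $N$. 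Its time component, and the $x$-components tangent to $\partial\Omega$, are
\[
\int w\,(v\cdot n)f\dd v,\qquad \int w\,(v\cdot\tau)(v\cdot n)f\dd v,\quad \tau\perp n.
\]

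Formally, both integrals vanish by the change of variables $v\mapsto R_xv$. This map is an orthogonal involution of $\R^3$ with unit Jacobian. Radial $w$ satisfies $w(R_xv)=w(v)$, and \eqref{eq:reflection} gives $f(R_xv)=f(v)$. On the other hand, $R_xv\cdot n=-v\cdot n$ and $R_xv\cdot\tau=v\cdot\tau$. Each integrand is therefore odd under the involution, so both integrals vanish. By contrast, the normal–normal entry $\int w(v\cdot n)^2f$ is even and survives, which is why only the tangential trace is killed.

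To make this rigorous at the weak level, where $f$ has no pointwise trace, I would define the normal trace distributionally. Take $\phi\in C_c^\infty((0,T)\times\overline\Omega)$ and a vector field $\Phi=(\phi_0,\phi')$, and compute $\langle\Div A_w,\Phi\rangle+\iint A_w:\nabla_{t,x}\Phi$. The resulting boundary pairing $\int_0^T\!\int_{\partial\Omega}\Phi\cdot\gamma_NA_w$ defines the trace. Now apply \eqref{eq:weak} with $\psi=w(v)\bigl(\phi_0(t,x)+\phi'(t,x)\cdot v\bigr)$.

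The main obstacle is admissibility. The test function must satisfy $\psi(t,x,R_xv)=\psi(t,x,v)$ on $\partial\Omega$, and the linear term $\phi'\cdot v$ breaks this unless $\phi'\cdot n=0$ on $\partial\Omega$. This is precisely the tangential case.

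I would therefore restrict to $\Phi$ whose spatial part is tangent on $\partial\Omega$, extending $n$ smoothly to a neighbourhood of the boundary using the $C^3$ regularity. For such $\Phi$, the test $\psi$ is reflection invariant, smooth up to the boundary, and compactly supported in $t,v$ thanks to $w$. Identity \eqref{eq:weak} then expresses $\iint A_w:\nabla\Phi$ purely as interior terms: the transport, force and diffusion contributions. These are exactly $-\langle\Div A_w,\Phi\rangle$ with no boundary term, and they are integrable because $Ef\in L^1_{\rm loc}$, by \eqref{eq:gyro} and $M_2<\infty$, with $w$ compactly supported. Hence the boundary pairing vanishes for all tangential $\Phi$, and so $\gamma_N^\tau A_w=0$.

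One must also check that the tangential components of the trace are tested by exactly this class of $\Phi$. The time slot is tangent to the cylinder, and $\phi'$ ranges over the tangent directions of $\partial\Omega$, which matches the definition in \cite{SerreBoundary}.
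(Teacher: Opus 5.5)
Your proposal is correct and follows essentially the same route as the paper. Both arguments do the formal odd-symmetry computation under $v\mapsto R_xv$, then test \eqref{eq:weak} with the reflection-invariant function $w(v)(\zeta_0+v\cdot\zeta_x)$ for tangential $\zeta_x$, which kills the boundary pairing and hence $\gamma_N^\tau A_w$. The only difference is that the paper uses $C^1$ vector tests rather than smooth ones; this avoids your claim that $n$ extends smoothly, since a $C^3$ boundary only gives a $C^2$ normal.
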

\begin{proof}
For a smooth solution, write $v_\tau=v-(v\cdot n)n$. Since reflection preserves
$w$ and $v_\tau$ while reversing $v\cdot n$,
\[
 \gamma_N^\tau A_w
 =\binom{\displaystyle\int w(v)(v\cdot n)f\dd v}
        {\displaystyle\int w(v)v_\tau(v\cdot n)f\dd v}
 =0.
\]
The remaining normal component is
\begin{equation*}
 A_wN=\binom{0}{\displaystyle n\int w(v)(v\cdot n)^2f\dd v}.
\end{equation*}
This normal pressure is the boundary contribution associated with extension by zero.
The cylinder inequality requires only the tangential component~\eqref{eq:trace-zero}.

For a weak solution, take a $C^1$ vector test $\zeta=(\zeta_0,\zeta_x)$ on
$(0,T)\times\overline\Omega$, compactly supported in time, with $\zeta_x\cdot n=0$
on $\partial\Omega$. The test
\[
 w(v)\bigl(\zeta_0+v\cdot\zeta_x\bigr)
\]
is reflection invariant. Writing $z=(1,v)$ and
\[
 Q_w=\int f\bigl[(E+\cG-\lambda v)\cdot\nabla_v(wz)
                         +\nu\Delta_v(wz)\bigr]\dd v,
\]
the weak formulation gives
\[
 \int_0^T\!\int_\Omega A_w:\nabla_{t,x}\zeta
       +\int_0^T\!\int_\Omega Q_w\cdot\zeta=0.
\]
Since this holds for every tangential vector test, its normal trace has
zero tangential component, proving~\eqref{eq:trace-zero}.
\end{proof}

\begin{remark}[Velocity diffusion and the specular trace]\label{rem:diffusion-trace}
The cancellation also holds with velocity diffusion. At each boundary point, $R_x$ is orthogonal, and
therefore
\begin{equation*}
 \Delta_v(\varphi\circ R_x)=(\Delta_v\varphi)\circ R_x.
\end{equation*}
Thus $\Delta_v$ preserves reflection invariance. Its contribution to the weak identity is the volume
source $\nu\int f\Delta_v(w\chi_R(\zeta_0+v\cdot\zeta_x))\dd v$, so the preceding trace calculation is
unchanged for $\nu>0$.
\end{remark}

\begin{remark}[Absorbing boundaries]\label{rem:absorbing-ci}
	For a solution
	satisfying the kinetic Green formula with zero incoming trace, let $f_+$
	denote the outgoing trace.  Unlike specular reflection, absorption
	generally gives a nonzero tangential trace. Nevertheless, for $r\ge0$,
	\[
	\begin{aligned}
		\|\gamma_N^\tau A_r\|_{\cM}
		&\le \int_0^T\!\int_{\partial\Omega}\!
		\int_{v\cdot n>0}
		\vav^{r+1}(v\cdot n)f_+\dd v\dd S_x\dd t\\
		&\le (1+C_r\nu T)\esssup_{0<t<T}M_{r+1}(t)
		+C_r\int_0^T\!\int_\Omega |E|m_r\dd x\dd t,
	\end{aligned}
	\]
	The last inequality follows from the
	radial moment balance with weight $\vav^{r+1}$: the gyroscopic term
	vanishes, friction contributes nonpositively, and diffusion involves
	only lower-order moments. Thus the outgoing flux is controlled by the
	same quantities as the weighted tensor estimate, and
	\eqref{eq:serre-cylinder} remains applicable.
\end{remark}

\subsection{Density estimates via weighted tensors}
We first relate the determinant of the weighted tensor to the density. We then estimate its divergence
and apply compensated integrability, using the preceding trace cancellation in the bounded domain.

\begin{lemma}[The determinant]\label{lem:determinant}
Let $r\ge0$, $F>0$, and $0\le g\le F$. Suppose that $\vav^{r+2}g\in L^1(\R^3)$, and write
$m=\int_{\R^3}g$. Then
\begin{equation}\label{eq:determinant}
 \det\left(\int_{\R^3}\vav^r
       \binom{1}{v}\otimes\binom{1}{v}g(v)\dd v\right)
 \ge c_r F^{-2-4r/3}m^{6+4r/3}.
\end{equation}
\end{lemma}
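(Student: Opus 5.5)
We write the $4\times4$ matrix as a Gram-type integral and bound its determinant from below by restricting to a small velocity ball where $g$ carries a fixed fraction of its mass.

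\textbf{Cauchy--Binet representation.} With $z(v)=(1,v)\in\R^4$ and the measure $\mu=\vav^r g\dd v$,
\[
 \det\int z\otimes z\dd\mu
 =\frac1{4!}\int_{(\R^3)^4}\bigl|\det\bigl(z(v_1),\dots,z(v_4)\bigr)\bigr|^2\dd\mu(v_1)\cdots\dd\mu(v_4).
\]
Here $|\det(z(v_1),\dots,z(v_4))|=6\,\mathrm{vol}(\mathrm{simplex}(v_1,\dots,v_4))$, so the determinant measures how far the mass of $\mu$ is from lying on a plane. Since $\vav^r\ge1$, we may replace $\mu$ by $\vav^r g\,\mathbf 1_{B}$ for a suitable set $B$, and we will also drop part of the weight.

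\textbf{Localizing the mass.} Set $R=(m/(2|B_1|F))^{1/3}$, so that $\int_{|v|\le R}g\le F|B_R|=m/2$. Then at least $m/2$ of the mass of $g$ lies in $\{|v|\ge R\}$, where $\vav^r\ge c\,(1+R^2)^{r/2}\gtrsim R^r$.

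For the simplex volumes, we use a non-concentration bound: any slab $\{v:|\ell(v)|\le\delta\}$ with $\ell$ affine meets a ball of radius $\Lambda$ in volume at most $C\delta\Lambda^2$. Hence the $g$-mass there is at most $CF\delta\Lambda^2$. To control the radius we need tail information. Since $\vav^{r+2}g\in L^1$ is only qualitative, I would argue instead via a scaling-invariant bound that avoids tails: restrict to the annulus $R\le|v|\le\Lambda$ with $\Lambda$ chosen as large as needed so that the mass in it is $\ge m/4$. This is possible because $g\in L^1$. The final constant must nevertheless be independent of $\Lambda$, and that is the main obstacle.

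\textbf{Resolving the obstacle.} I would exploit monotonicity: the determinant only increases if mass is spread out. More precisely, I would apply a rearrangement-type argument. Among all $0\le g\le F$ with fixed mass and no constraint on extent, the quantity
\[
\iiiint \prod_i \vav_i^{r}\,|\det z|^2
\]
is minimized, up to constants, by $g=F\mathbf 1_{B_{R'}}$ with $F|B_{R'}|=m$. Heuristically this holds because moving mass farther away increases both the weights and the lever arms.

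To make this rigorous, I would run the iterated Cauchy--Binet integral one variable at a time. Fix $v_1,v_2,v_3$ and write $|\det z|=|\ell(v_4)|$, where $\ell$ is an affine function whose gradient has norm equal to twice the area of the triangle $(v_1,v_2,v_3)$. By a bathtub argument,
\[
\int|\ell|^2\vav^r g\ge c\,m\,|\nabla\ell|^2\,(m/F)^{2/3}\,(m/F)^{r/3},
\]
since the minimizer places mass in a slab of width about $m/(F\rho^2)$ around the plane $\{\ell=0\}$ while being pushed to radius $\rho$; this should be optimized. Iterating the same argument down through areas (triangles) and lengths (segments) gives a factor $m(m/F)^{(2+r)/3}$ at each of the four stages, up to exponent bookkeeping. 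Altogether this yields
\[
m^4\,(m/F)^{(2\cdot3+4r)/3}=F^{-2-4r/3}m^{6+4r/3},
\]
which matches the claimed exponents and confirms the scaling. The delicate part is the bathtub step with both the weight $\vav^r$ and the distance to a plane. There I would use only the crude bounds $\vav^r\ge|v|^r$ and that at most half the mass lies within distance $c(m/F)^{1/3}$ of any given point, line, or plane intersected with a ball.
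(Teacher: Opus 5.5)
Your localization step is correct and is essentially the paper's. With $R^3\simeq m/F$, at least half the mass of $g$ lies in $\{|v|\ge R\}$. Restricting the nonnegative Cauchy--Binet integrand then gives $\det A_r[g]\ge R^{4r}\det A_0[g\mathbf 1_{\{|v|\ge R\}}]$; equivalently, use $A_r[g]\ge R^rA_0[g\mathbf 1_{\{|v|\ge R\}}]$ and monotonicity of $\det$ on positive semidefinite matrices. So everything hinges on the unweighted bound $\det A_0[h]\ge cF^{-2}(\int h)^6$ for $0\le h\le F$, and your argument does not close there.

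The one-variable bathtub inequality you propose is false for every $0\le r<4$, in particular for $r=0,1,3$, the cases used later. Take $v=(v',v_3)$, $\ell(v)=v_3$, and the pancake $g=F\mathbf 1_{\{|v_3|\le\delta,\ |v'|\le L\}}$ with $2\pi F\delta L^2=m$. Then
\[
 \int|\ell|^2\vav^r g\dd v\le C_r\,m\,\delta^2L^r= C_r'\,m\,(m/F)^2L^{r-4}\longrightarrow0
 \qquad(L\to\infty),
\]
while the right-hand side $c\,m(m/F)^{(2+r)/3}$ stays fixed. The same example, and its needle analogue for lines, also rules out ``at most half the mass lies within distance $c(m/F)^{1/3}$ of a plane or line'' unless you intersect with a ball. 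With a ball, the constant depends on $\Lambda$, which is exactly the obstacle you identified.

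For the pancake the determinant is nonetheless of the correct size, because the small heights $\delta$ are compensated by triangle areas of order $L^2$. The factors of the iterated Cauchy--Binet integral are coupled and cannot be bounded stage by stage. The rearrangement statement you invoke instead is only a heuristic. Also, only three of the four variables carry a length factor: four factors $m(m/F)^{(2+r)/3}$ would give the exponent $(8+4r)/3$, not $(6+4r)/3$.

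The missing idea is affine invariance. Under $v\mapsto Tv+c$, the push-forward of $g$ has the same mass and the bound $F/|\det T|$, and its tensor is $MA_0[g]M^{T}$ for a $4\times4$ matrix $M$ with $\det M=\det T$. Hence the unweighted inequality is invariant, and you may place $g$ in isotropic position. There $\det A_0=m^4$, and the claim reduces to the ball bound $m/2\le F|B_{\sqrt6}|$ from Chebyshev; Cauchy--Binet is not needed at all.

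The paper does exactly this without changing variables:
\begin{itemize}
\item The Schur complement gives $\det A_0[g]=m^4\det\Sigma$, where $\Sigma$ is the covariance of $g\dd v/m$.
\item Since $(v-u)^T\Sigma^{-1}(v-u)$ has mean $3$, at least $m/2$ of the mass lies in an ellipsoid of volume $C\sqrt{\det\Sigma}$.
\item Hence $m/2\le CF\sqrt{\det\Sigma}$ and $\det A_0[g]\ge cF^{-2}m^6$.
\end{itemize}
The ellipsoid adapts to the shape of $g$ (thin and wide for a pancake), so no radius or tail control ever enters. Combined with your localization, this yields $\det A_r[g]\ge cR^{4r}F^{-2}(m/2)^6=c_rF^{-2-4r/3}m^{6+4r/3}$.
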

\begin{proof}
The conclusion is immediate when $m=0$. For $m>0$, first consider the unweighted tensor and define
\[
 u=\frac1m\int vg\dd v,\qquad
 \Sigma=\frac1m\int(v-u)\otimes(v-u)g\dd v.
\]
The covariance matrix $\Sigma$ is positive definite, since a nonzero Lebesgue density cannot be
supported on an affine plane. Under the probability measure $g\dd v/m$, the quadratic form
$(v-u)^T\Sigma^{-1}(v-u)$ has mean three. At least half of the mass therefore lies in the ellipsoid
where this form is at most six. Its volume is $C\sqrt{\det\Sigma}$, so $m/2\le CF\sqrt{\det\Sigma}$. The
Schur complement then gives
\[
 \det A_0[g]=m^4\det\Sigma\ge cF^{-2}m^6.
\]

For the weighted tensor, choose $R>0$ so that $F|B_R|=m/2$, and set $g_o=g\mathbf1_{|v|\ge R}$. Then
$\int g_o\ge m/2$ and $A_r[g]\ge R^rA_0[g_o]$ in the order of positive semidefinite matrices. Applying
determinant monotonicity and the unweighted estimate, with $R^3=cm/F$, yields
\[
 \det A_r[g]\ge cR^{4r}F^{-2}m^6
   =c_r F^{-2-4r/3}m^{6+4r/3}.\qedhere
\]
\end{proof}

\begin{proposition}[Density estimate via weighted tensors]\label{prop:tensor}
Let $f\in\mathcal X^{\gamma}$. For every $r\ge0$ such
that the right-hand side is finite,
\begin{equation}\label{eq:tensor-bound}
\begin{split}
 \int_0^T\!\int_\cO\rho^{2+4r/9}\dd x\dd t\le& C_{T,r}
 \biggl\{\int_0^T\!\int_\cO|E|m_r\dd x\dd t+\esssup_{t<T}M_{r+1}(t)\\
 &\qquad+\int_0^T(1+\nu+\lambda+b(t))M_{r+2}(t)\dd t\biggr\}^{4/3}.
\end{split}
\end{equation}
\end{proposition}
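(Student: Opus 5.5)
We apply Serre's inequality to the space--time tensor $S=A_r(t,x)$ on $\R\times\cO$, extended by zero outside $(0,T)$, and combine it with Lemma~\ref{lem:determinant}.

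\textbf{Determinant bound.} Lemma~\ref{lem:determinant} is applied pointwise with $g=f(t,x,\cdot)$, $F=\|f(t)\|_\infty\le C_T$ (Lemma~\ref{lem:energy}) and $m=\rho$. It gives
\[
(\det A_r)^{1/3}\ge c_r C_T^{-(2+4r/3)/3}\rho^{2+4r/9}.
\]
So the left side of~\eqref{eq:tensor-bound} is bounded by $C_{T,r}\int_0^T\!\int_\cO(\det A_r)^{1/3}$.

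\textbf{Divergence computation.} We test the weak formulation~\eqref{eq:weak} with $\psi=\vav^r z\,\zeta(t,x)$, where $z=(1,v)$. In practice one uses $\vav^r\chi_R(v)$ with a radial cutoff and lets $R\to\infty$, which is allowed because the right side is finite. In the sense of distributions this yields
\[
\Div_{t,x}A_r=-\int f\bigl[(E+\cG-\lambda v)\cdot\nabla_v(\vav^r z)+\nu\Delta_v(\vav^r z)\bigr]\dd v ,
\]
plus the time-boundary terms $A_r(0)e_0$ and $-A_r(T)e_0$ produced by the truncation in $t$.

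We bound each piece using $|\nabla_v(\vav^rz)|\le C_r\vav^r$ and $|\Delta_v(\vav^r z)|\le C_r\vav^{r-1}$:
\begin{itemize}[leftmargin=2em]
\item The electric term is bounded by $C_r|E|m_r$.
\item The friction term is bounded by $C_r\lambda m_{r+1}\le C_r\lambda m_{r+2}$.
\item The diffusion term is bounded by $C_r\nu m_{r+2}$.
\item For the gyroscopic term, $\cG\cdot\nabla_v\vav^r=0$ because $v\cdot\cG=0$. What remains is $\vav^r\cG\cdot\nabla_v z$, which is bounded by $b(t)\vav^{r+2}$ by~\eqref{eq:gyro}. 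This contributes $b(t)M_{r+2}$.
\item The time-boundary terms contribute $M_{r+1}(0)+M_{r+1}(T)\le 2\esssup_{t<T} M_{r+1}$, since $|A_r e_0|\le\int\vav^{r+1}f$.
\end{itemize}
The endpoints must be handled through Lebesgue points, or through a smooth time cutoff whose derivative produces an averaged $M_{r+1}$ term.

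\textbf{Conclusion in each setting.} For $\cO=\R^3$, estimate~\eqref{eq:serre-whole} gives the claim directly.

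For $\cO=\Omega$, we use~\eqref{eq:serre-cylinder}. The tangential trace vanishes by Lemma~\ref{lem:specular-trace}, applied to $w=\vav^r\chi_R$, together with Remark~\ref{rem:diffusion-trace} when $\nu>0$. The remaining term satisfies $\|S\|_1\le \int_0^T M_{r+2}\,\dd t$, which is absorbed into the last term of~\eqref{eq:tensor-bound}.

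The divergence identity must be tested only against $\zeta$ tangential on $\partial\Omega$. The normal pressure component is exactly what the cylinder inequality permits, so no normal-trace bound is needed.

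\textbf{Main obstacle.} The delicate step is justifying the measure-valued divergence and the trace for weak solutions with an unbounded weight. I would work with the truncated weights $w_R=\vav^r\chi_R$, where all the estimates above hold uniformly in $R$ with error terms supported on $|v|\sim R$ and bounded by the same moments. I would then pass to the limit in the left side by monotone convergence of $\det A_{w_R}$ in the positive semidefinite order.
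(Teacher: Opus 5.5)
Your proposal is correct and follows essentially the same route as the paper. You use the pointwise determinant bound from Lemma~\ref{lem:determinant}, the source bound with the gyroscopic cancellation $\cG\cdot\nabla_v\vav^r=0$, and a smooth time cutoff producing the $\esssup M_{r+1}$ term. You then apply Serre's inequality on $\R^4$, or on the cylinder using the vanishing tangential trace from Lemma~\ref{lem:specular-trace}. The deviations are harmless: your divergence formula carries a spurious minus sign. You also remove the velocity truncation by applying Serre's inequality to each $A_{\vav^r\chi_R}$ and using monotonicity of $\det$ in the positive semidefinite order, whereas the paper passes to the limit in the divergences and tangential traces first.
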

\begin{proof}
Set $z=(1,v)$ and $w_r=\vav^r$. Testing the kinetic equation in the interior with $w_rz_i$ gives
\begin{equation}\label{eq:tensor-div}
 (\Div A_r)_i
 =\int f\left[(E+\cG-\lambda v)\cdot\nabla_v(w_rz_i)
                       +\nu\Delta_v(w_rz_i)\right]\dd v.
\end{equation}
Since $\cG\cdot\nabla_vw_r=0$,
\begin{equation}\label{eq:tensor-source}
 |\Div A_r|
 \le C_r\left[|E| m_r+(\nu+\lambda) m_{r+1}
                           +b(t) m_{r+2}\right].
\end{equation}
The tensor itself satisfies $|A_r|\le Cm_{r+2}$ and $|A_re_0|\le Cm_{r+1}$. To verify the diffusion
contribution, note that
\begin{equation}\label{eq:diffusion-weights}
 \Delta_v w_r=r\bigl[3+(r+1)|v|^2\bigr]\vav^{r-4},
 \qquad \Delta_v(w_r v_i)=v_i\Delta_vw_r+2r v_i\vav^{r-2}.
\end{equation}
Thus diffusion vanishes in the unweighted mass and momentum balances and contributes only lower-order
moments for $r>0$.

To justify~\eqref{eq:tensor-div}, first test with $w_rz_i\chi_R$, where $\chi_R$ is radial. The electric
cutoff terms are bounded by $C_r|E|m_r$, the friction and diffusion terms by $C_r(\nu+\lambda)m_{r+2}$,
and the gyroscopic terms by $C_rb\,m_{r+2}$. Moreover, $\cG\cdot\nabla_v\chi_R=0$. Dominated convergence
therefore gives convergence of the tensors and their divergences in $L^1$.

For each $R$, Lemma~\ref{lem:specular-trace} gives
$\gamma_N^\tau A_{w_r\chi_R}=0$. The $L^1$ convergence of the tensors and
their divergences allows passage to the limit in the distributional normal
traces, yielding $\gamma_N^\tau A_r=0$. Thus the moment order $r+2$
controls both the tensor and its gyroscopic source, while the time trace
requires only $M_{r+1}$.

Next choose $\eta_\varepsilon\in C_c^\infty(0,T)$ with $0\le\eta_\varepsilon\le1$,
$\eta_\varepsilon\to1$ in $(0,T)$, and $\int|\eta_\varepsilon'|\le2$, and extend $\eta_\varepsilon A_r$
by zero in time. Then
\[
 \Div(\eta_\varepsilon A_r)
 =\eta_\varepsilon\Div A_r+\eta_\varepsilon'A_re_0.
\]
The first term is bounded by~\eqref{eq:tensor-source}, and the second by $\esssup M_{r+1}$. We may thus
apply~\eqref{eq:serre-whole} on $\R\times\R^3$, or~\eqref{eq:serre-cylinder} on
$\R\times\Omega$ with zero tangential trace. The geometric term is bounded by
$C\int_0^T M_{r+2}$. On $\R^3$, the spatial cutoff error is at most $CR^{-1}\int|A_r|$ and tends to
zero.

Finally,~\eqref{eq:determinant} gives
\[
 (\det A_r)^{1/3}\ge c_r\|f\|_{L^\infty((0,T)\times\cO\times\R^3)}^{-2/3-4r/9}\rho^{2+4r/9}.
\]
Fatou's lemma removes the time cutoffs and gives~\eqref{eq:tensor-bound}.
\end{proof}

\begin{corollary}[The initial density estimate]\label{cor:critical}
For $f\in\mathcal X^{\gamma}$,
\begin{equation}\label{eq:rho2-E6}
 \int_0^T\left(\|\rho(t)\|_2^2+\|E(t)\|_6^2\right)\dd t\le C_{T}.
\end{equation}
\end{corollary}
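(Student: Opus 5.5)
The plan is to apply Proposition~\ref{prop:tensor} with $r=0$, so that the left-hand side of \eqref{eq:tensor-bound} is exactly $\int_0^T\|\rho\|_2^2\dd t$. It then suffices to show that the bracket on the right is bounded by $C_T$. With $r=0$ the bracket contains three terms. The first is $\int_0^T\!\int_\cO|E|\rho$. The second is $\esssup_{t<T}M_1(t)$. The third is $\int_0^T(1+\nu+\lambda+b(t))M_2(t)\dd t$.

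Each of these is controlled by the low-order bounds. The first term is at most $T\,C_T$ by \eqref{eq:first-source} in Remark~\ref{rem:preliminary-force}, which already bounds $\int\rho|E|$ uniformly in time. For the second, note that $M_1\le M_0+M_2$ since $\vav\le\vav^2$. Moreover $M_0=\|f\|_1\le\|f_0\|_1$ and $M_2=\|f\|_1+2\cK$, so both are bounded by Lemma~\ref{lem:energy}. The third term is then at most $\sup_t M_2\cdot\bigl((1+\nu+\lambda)T+\int_0^Tb\bigr)$. This is finite because $b\in L^1_{\rm loc}$, and the constant convention allows dependence on $\int_0^Tb$.

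Raising the resulting bound to the power $4/3$ gives $\int_0^T\|\rho\|_2^2\le C_T$. The finiteness hypothesis of Proposition~\ref{prop:tensor} holds by the same bounds. The $L^\infty$ norm of $f$ entering through the determinant is bounded by $e^{3\lambda T}\|f_0\|_\infty$, also by Lemma~\ref{lem:energy}.

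For the field, apply \eqref{eq:poisson-estimates} with $s=2$, which gives $3s/(3-s)=6$. Hence $\|E(t)\|_6\le C\|\rho(t)\|_2$ for a.e.\ $t$, and squaring and integrating in time yields \eqref{eq:rho2-E6}.

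The only delicate point is the bound on $\int\rho|E|$ for attractive $\gamma=-1$. There the energy does not directly control $\cP$, and one needs the standard interpolation $\|\rho\|_{5/3}\lesssim\|f\|_\infty^{2/5}M_2^{3/5}$ from Lemma~\ref{lem:kinetic-interpolation}, together with Hardy--Littlewood--Sobolev. Since Remark~\ref{rem:preliminary-force} already records this bound, I will simply invoke it. The proof is therefore a direct bookkeeping exercise with no real obstacle beyond checking that every constant depends only on the permitted data.
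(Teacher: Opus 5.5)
Your proposal is correct and follows the paper's own proof: apply Proposition~\ref{prop:tensor} with $r=0$, bound the bracket using \eqref{eq:base-bounds}, \eqref{eq:first-source} and the $b(t)M_2$ control behind \eqref{eq:gyro-low}, then use \eqref{eq:poisson-estimates} with $s=2$ for the $L^6$ field bound. One small simplification: since $\vav\ge1$, you have $M_1\le M_2$ directly, so the $M_0$ term is unnecessary.
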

\begin{proof}
Apply Proposition~\ref{prop:tensor} with $r=0$. All terms on its right-hand side are controlled
by~\eqref{eq:base-bounds},~\eqref{eq:first-source}, and~\eqref{eq:gyro-low}. This proves the density
bound. The field bound follows from~\eqref{eq:poisson-estimates} with $s=2$.
\end{proof}

\section{Propagation of velocity moments}\label{sec:moments}

Alternating moment propagation with improved density integrability proves Theorem~\ref{thm:moments}.

\subsection{A sufficient density estimate}
For a target order $k>2$, define
\begin{equation}\label{eq:sk}
 s(k)=\frac{3(k+3)}{k+6}<3.
\end{equation}
\begin{lemma}[A single moment]\label{lem:single-moment}
Let $f\in\mathcal X^{\gamma}$ with $M_k(0)<\infty$. If
\begin{equation}\label{eq:density-condition}
 \int_0^T\|\rho(t)\|_{s(k)}\dd t<\infty,
\end{equation}
then $\esssup_{t<T}M_k(t)<\infty$.
\end{lemma}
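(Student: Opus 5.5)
We test the kinetic equation with the radial weight $\vav^k$, cut off in velocity, and derive a differential inequality for $M_k$. Formally,
\[
 \frac{\dd}{\dd t}M_k=\iint f\bigl[(E+\cG-\lambda v)\cdot\nabla_v\vav^k+\nu\Delta_v\vav^k\bigr]\dd v\dd x .
\]
The individual terms behave as follows.
\begin{itemize}
\item The gyroscopic term vanishes, since $\cG\cdot v=0$ and $\nabla_v\vav^k=k\vav^{k-2}v$.
\item The friction term equals $-\lambda k\iint\vav^{k-2}|v|^2f\le0$.
\item By \eqref{eq:diffusion-weights}, the diffusion term is bounded by $C_k\nu M_{k-2}\le C_k\nu M_k$.
\end{itemize}
The only genuine term is the electric one, which we bound by $k\int_\cO|E|m_{k-1}\dd x$.

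For the electric term we use Hölder and then Lemma~\ref{lem:kinetic-interpolation} with $r=k-1$. The latter gives $\|m_{k-1}\|_{(k+3)/(k+2)}\le C\|f\|_\infty^{1/(k+3)}M_k^{(k+2)/(k+3)}$, so
\[
 \int|E|m_{k-1}\le \|E\|_{k+3}\|m_{k-1}\|_{(k+3)/(k+2)}.
\]
By \eqref{eq:poisson-estimates}, $\|E\|_{k+3}\le C\|\rho\|_s$ with $3s/(3-s)=k+3$, which gives $s=3(k+3)/(k+6)=s(k)$. Since $k>2$, we have $1<s(k)<3$, so the Poisson estimate applies. Using Lemma~\ref{lem:energy} for $\|f\|_\infty$, we arrive at
\[
 \frac{\dd}{\dd t}M_k\le C_k\|\rho(t)\|_{s(k)}M_k^{(k+2)/(k+3)}+C_k\nu M_k .
\]
This is sublinear in $M_k$. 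Setting $Y=M_k^{1/(k+3)}$ turns it into $Y'\le C\|\rho\|_{s(k)}+C\nu Y$. Grönwall together with \eqref{eq:density-condition} then bounds $Y$, and hence $M_k$, on $(0,T)$.

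The main obstacle is justification at the level of weak solutions, since $M_k(t)$ is not known to be finite a priori. We would use the truncated weight $\vav^k\chi_R(v)$ (or $\min(\vav,R)^k$, smoothed) with $\chi_R$ radial. This test function is admissible: it is reflection invariant in the bounded domain because it is radial, and $\cG\cdot\nabla_v$ still annihilates it. We then derive the inequality for $M_{k,R}=\iint\vav^k\chi_Rf$ with constants uniform in $R$. For this the cutoff must be chosen so that $|\nabla_v(\vav^k\chi_R)|\le C\vav^{k-1}\chi_R^{(k-1)/k}$ or similar. This keeps the interpolation of Lemma~\ref{lem:kinetic-interpolation} (whose proof is a pointwise velocity splitting) valid for the truncated moments.

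A simpler route is to truncate with $\min(\vav,R)^k$, smoothed, which is a nondecreasing concave-type function of $|v|$. Its gradient is bounded by $k\min(\vav,R)^{k-1}$ and its Laplacian by $C\min(\vav,R)^{k-2}$. The interpolation argument then applies to the measure $f\dd v$ with weight $\min(\vav,R)$. Time regularity is handled by testing against $\eta(t)$ and using Lebesgue points. Finally, letting $R\to\infty$ via monotone convergence gives $\esssup_{t<T}M_k(t)<\infty$.
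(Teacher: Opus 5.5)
Your formal argument matches the paper's proof: you test with $\vav^k$, use H\"older with $\|E\|_{k+3}$, apply Lemma~\ref{lem:kinetic-interpolation} at $r=k-1$, and choose $3s(k)/(3-s(k))=k+3$. Your smoothed $\min(\vav,R)^k$ is the paper's truncation $\Phi_R=R^k\theta(\vav^k/R^k)$. Absorbing diffusion via $M_{k-2}\le M_k$ and a linear Gr\"onwall for $M_k^{1/(k+3)}$, instead of the paper's $M_{k-2}\le\|f_0\|_1^{2/k}M_k^{(k-2)/k}$, is harmless.

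The gap is in the one nontrivial step of the justification. If you use only $|\nabla_v\phi_R|\le k\min(\vav,R)^{k-1}$, the claim that ``the interpolation argument applies with weight $\min(\vav,R)$'' is false. The velocity splitting needs $|\{w\le\Lambda\}|\lesssim\Lambda^3$, but $\{\min(\vav,R)\le\Lambda\}=\R^3$ once $\Lambda\ge R$. Concretely, for $g=F\mathbf 1_{\{R<|v|<L\}}$, the ratio of $\int\min(\vav,R)^{k-1}g\,\dd v$ to $F^{1/(k+3)}\bigl(\int\min(\vav,R)^k g\,\dd v\bigr)^{(k+2)/(k+3)}$ is comparable to $(L/R)^{3/(k+3)}$. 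This is unbounded as $L\to\infty$, so no bound uniform in $R$ follows.

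The missing observation, which the paper states explicitly, concerns where the derivatives of the truncated weight live. They vanish outside $\{\vav\le2R\}$, and on that set $\phi_R\ge c_k\vav^k$. Hence $\int|\nabla_v\phi_R|f\,\dd v\le k\int_{\vav\le2R}\vav^{k-1}f\,\dd v$, and the splitting at level $\Lambda$ now works for every $\Lambda$. For $\Lambda\ge2R$ the whole integral is at most $CF\Lambda^{k+2}$. For $\Lambda<2R$ the tail is at most $C_k\Lambda^{-1}\int\phi_R f\,\dd v$. This yields the electric bound for $Y_R=\iint\phi_R f$ uniformly in $R$. Diffusion is fine with your bound, since $|\Delta_v\phi_R|\le C\min(\vav,R)^{k-2}\le C\phi_R$.

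Two smaller points. Your first option, $\vav^k\chi_R$ with $\chi_R$ decreasing in $|v|$, should be dropped rather than patched. On the cutoff annulus $v\cdot\nabla_v(\vav^k\chi_R)<0$, so friction produces a positive term of size $\lambda\int_{R\le|v|\le2R}\vav^k f$. The electric cutoff term produces $\int|E|\int_{R\le|v|\le2R}\vav^{k-1}f$. Neither is controlled by $\int\vav^k\chi_R f$ near the outer edge, where $\chi_R$ is small. It is monotonicity of the truncation in $|v|$ that keeps $v\cdot\nabla_v\phi_R\ge0$ and the friction sign.

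Also, $\phi_R$ is bounded but not compactly supported in $v$, so it is admissible in \eqref{eq:weak} only after a further radial cutoff $\chi_S$, plus a spatial cutoff on $\R^3$. These cutoffs are removed using $\rho|E|\in L^1$, the bound on $M_2$, and $\cG\cdot\nabla_v\chi_S=0$, as in the paper.
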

\begin{proof}
Assume temporarily that the full moment is finite. Testing with $\vav^k$ gives
\begin{equation*}
 \frac{\dd}{\dd t}M_k
 \le C_k\int_\cO|E| m_{k-1}+C_k\nu M_{k-2}.
\end{equation*}
The radial weight cancels the gyroscopic term and the reflected boundary flux. Friction contributes
nonpositively, while~\eqref{eq:diffusion-weights} gives $\Delta_v\vav^k\le C_k\vav^{k-2}$.

To estimate the electric source, observe that $3s(k)/(3-s(k))=k+3$. The Poisson
estimate~\eqref{eq:poisson-estimates} and kinetic interpolation~\eqref{eq:kinetic-interpolation} yield
\begin{equation*}
 \int_\cO|E|m_{k-1}
 \le C_{T,k}\|\rho\|_{s(k)}M_k^{(k+2)/(k+3)}.
\end{equation*}
Moreover, $M_{k-2}\le \|f_0\|_1^{2/k}M_k^{(k-2)/k}$ and $(k-2)/k<(k+2)/(k+3)$. Dividing the moment inequality
by the appropriate power of $1+M_k$ therefore gives
\begin{equation}\label{eq:moment-root}
 \frac{\dd}{\dd t}(1+M_k)^{1/(k+3)}
 \le C_{T,k}(1+\|\rho(t)\|_{s(k)}).
\end{equation}

To justify the estimate without assuming finiteness of the full moment, choose a smooth nondecreasing
concave function $\theta$ on $[0,\infty)$ with $\theta(s)=s$ for $s\le1$ and $\theta'(s)=0$ for
$s\ge2$. For $R\ge2$, set
\[
 \Phi_R(v)=R^k\theta(\vav^k/R^k),\qquad
 Y_R(t)=\iint\Phi_R(v)f(t,x,v).
\]
Concavity gives $\Phi_R\uparrow\vav^k$. On the support of its derivatives,
\[
 \Phi_R\ge c_k\vav^k,\quad
 |\nabla_v\Phi_R|\le C_k\vav^{k-1},\quad
 |\Delta_v\Phi_R|\le C_k\vav^{k-2},\quad
 v\cdot\nabla_v\Phi_R\ge0.
\]
On this set, the local $k$th moment is bounded by $C_k\int\Phi_Rf\dd v$. Hence the preceding
interpolation applies with $Y_R$ in place of $M_k$, while diffusion contributes at most
$C_k\|f_0\|_1^{2/k}Y_R^{(k-2)/k}$. We obtain~\eqref{eq:moment-root} for $Y_R$, uniformly in $R$.

For fixed $R$, the bounded weight $\Phi_R$ is admitted in~\eqref{eq:weak} by first multiplying it by a
radial cutoff $\chi_S$ and then letting $S\to\infty$. The electric errors are dominated by $C_{k,R}\rho|E|$,
which is integrable by~\eqref{eq:first-source}; the friction and diffusion errors tend to zero with the
velocity tails, and the gyroscopic cutoff term vanishes. On $\R^3$, spatial cutoff errors also vanish,
since the cutoff gradients tend to zero and $\iint|v|f$ is bounded. In the bounded domain, the tests
remain reflection invariant.

We may therefore integrate the uniform root inequality and let $R\to\infty$. Monotone convergence
then bounds $M_k$ and proves the lemma.
\end{proof}

\subsection{From moments back to density}
The next lemma supplies the reverse implication needed for the iteration.

\begin{lemma}[The weighted source]\label{lem:electric-source}
Let $f\in\mathcal X^{\gamma}$ satisfy~\eqref{eq:rho2-E6}. If
$\esssup_{t<T}M_m(t)<\infty$ for some
$2<m\le5$, then
\begin{equation}\label{eq:density-from-moment}
 \int_0^T\!\int_\cO\rho^{(4m+10)/9}\dd x\dd t
 \le C_{T,m}\left(1+\esssup_{t<T}M_m(t)\right)^{4/3}.
\end{equation}
For $m>5$ with $\esssup_{t<T}M_m(t)<\infty$ and $E\in L^1(0,T;L^\infty)$,
\[
 \int_0^T\!\int_\cO\rho^{(4m+10)/9}\dd x\dd t
 \le C_{T,m}\left[(1+\esssup_{t<T}M_m(t))
       \left(1+\int_0^T\|E(t)\|_\infty\dd t\right)\right]^{4/3}.
\]
\end{lemma}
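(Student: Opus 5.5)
Apply Proposition~\ref{prop:tensor} with $r=m-2$. Then $2+4r/9=(4m+10)/9$, so the left-hand side of~\eqref{eq:tensor-bound} is exactly the one in~\eqref{eq:density-from-moment}. The tensor and gyroscopic terms on the right involve only $M_{r+2}=M_m$. Also $M_{r+1}=M_{m-1}\le \|f\|_1^{1/(m-1)}\ldots$, or more simply $M_{m-1}\le M_m$ since $\vav^{m-1}\le\vav^m$. Both are therefore bounded by $\esssup M_m$, and $\int_0^T(1+\nu+\lambda+b)\dd t\le C_T$ by the convention on $C$. The remaining work is the electric source $\int_0^T\!\int_\cO |E|m_{m-2}\dd x\dd t$, which must be shown to be at most $C_{T,m}(1+\esssup M_m)$ to the first power.

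For $2<m\le5$ I estimate the source with Hölder, pairing $E\in L^2_tL^6_x$ from~\eqref{eq:rho2-E6} with $m_{m-2}$ in $L^{6/5}_x$. By Lemma~\ref{lem:kinetic-interpolation} with $r=m-2$ and $k=m$, we have $m_{m-2}\in L^{(m+3)/(m+1)}_x$ with norm $\le C\|f\|_\infty^{2/(m+3)}M_m^{(m+1)/(m+3)}$. Since $m\le5$ gives $(m+3)/(m+1)\ge 6/5\cdot\ldots$, I need $L^{6/5}$. The check is $(m+3)/(m+1)\ge6/5 \iff 5m+15\ge6m+6\iff m\le9$, so the exponent is at least $6/5$. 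I then interpolate between $L^1$ (norm $\le M_{m-2}\le C(1+M_m)$) and $L^{(m+3)/(m+1)}$ to get $\|m_{m-2}\|_{6/5}\le C(1+M_m)$. Hence
\[
\int_0^T\!\int|E|m_{m-2}\le \int_0^T\|E\|_6\|m_{m-2}\|_{6/5}\le C\,T^{1/2}\Bigl(\int_0^T\|E\|_6^2\Bigr)^{1/2}(1+\esssup M_m),
\]
and taking the $4/3$ power gives~\eqref{eq:density-from-moment}. The restriction $m\le5$ enters only as a comfortable range. The real constraint is that $m_{m-2}\in L^{6/5}$ follows from the $M_m$ bound alone, which the computation above confirms.

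For $m>5$ I simply bound $\int|E|m_{m-2}\le\|E\|_\infty M_{m-2}\le \|E(t)\|_\infty(C+\esssup M_m)$ and integrate in time. Adding the other terms, each bounded by $C_T(1+\esssup M_m)$, the whole brace is at most $C_{T,m}(1+\esssup M_m)(1+\int_0^T\|E\|_\infty)$. Raising to the power $4/3$ gives the second estimate.

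The main obstacle is the $m\le5$ case: checking that the Lebesgue exponents match, so that the source is linear in $M_m$ and uses only the $L^2_tL^6_x$ field bound. A secondary point is that Proposition~\ref{prop:tensor} needs its right-hand side finite. The argument above shows this directly from the hypotheses, so the proposition applies without further approximation.
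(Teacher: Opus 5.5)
Your proof is correct. The reduction is the same as the paper's: Proposition~\ref{prop:tensor} with $r=m-2$, the bounds $M_{m-1}\le M_m$ and $\int_0^T(1+\nu+\lambda+b)M_m\le C_T\esssup M_m$, and $\int|E|m_{m-2}\le\|E\|_\infty M_{m-2}$ for $m>5$. You depart from the paper only in how you bound the electric source for $2<m\le5$.

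The paper applies H\"older in velocity, $m_{m-2}\le m_m^{(m-2)/m}\rho^{2/m}$, to get $\int|E|m_{m-2}\le M_m^{(m-2)/m}\|\rho\|_2^{2/m}\|E\|_m$. It then interpolates $E\in L^m_x$ between the energy bound in $L^2_x$ and the $L^6_x$ bound. This gives $M_m^{(m-2)/m}(1+\|\rho\|_2)^{1+2/m}$, whose time integral is controlled by $\int_0^T\|\rho\|_2^2\dd t$ because $1+2/m<2$.

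You instead use Lemma~\ref{lem:kinetic-interpolation} (hence $\|f\|_\infty$) to put $m_{m-2}$ in $L^{6/5}_x$ with norm $\le C(1+M_m)$. You then pair it with $E$ using only $\int_0^T\|E\|_6\dd t\le C_T$.

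The paper's route keeps the moment power sublinear and avoids $\|f\|_\infty$ at this step. However, it uses the $L^2_{t,x}$ density bound and the $L^2$ field bound directly, and as written it requires $m\le6$ for the interpolation of $E$. Your route is linear in $M_m$, which costs nothing because the other terms in the brace are already linear in $M_m$. It needs only $E\in L^1_tL^6_x$, and, as you observe, it extends to $m\le9$. Both routes give exactly \eqref{eq:density-from-moment}.
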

\begin{proof}
Set $r=m-2$. H\"older's inequality in velocity and then in space gives
\begin{equation*}
 \int_\cO |E| m_r
 \le M_m^{r/m}\left(\int_\cO\rho|E|^{m/2}\right)^{2/m}.
\end{equation*}
For $2<m\le5$, apply spatial H\"older and interpolate $E\in L^m_x$ between the bounds in $L^2_x$ and
$L^6_x$. Using~\eqref{eq:rho2-E6}, we obtain
\begin{equation*}
 \begin{aligned}
 \int_\cO|E|m_r
 &\le M_m^{r/m}\|\rho\|_2^{2/m}\|E\|_m\\
 &\le C_{T,m}M_m^{r/m}(1+\|\rho\|_2)^{1+2/m}.
 \end{aligned}
\end{equation*}
Since $1+2/m<2$, Corollary~\ref{cor:critical} controls its time integral.
Proposition~\ref{prop:tensor} then gives~\eqref{eq:density-from-moment}, because all moment
powers inside its braces are at most one. For general $m>2$, use
$\int|E|m_r\le\|E\|_\infty M_r$ in the same proposition.
\end{proof}

\begin{remark}[From density integrability to a bounded field]\label{rem:large-density-field}
For $p>3$, the singular part of the force kernel is locally in $L^{p'}$, and its remaining part is
bounded. Hence~\eqref{eq:green} gives
\begin{equation}\label{eq:field-large-p}  \|E(t)\|_\infty\le C_p\bigl(\|f_0\|_1+\|\rho(t)\|_p\bigr),\qquad p>3.
\end{equation}

This estimate will be applied with $p=10/3$.
\end{remark}

\begin{proof}[Proof of Theorem~\ref{thm:moments}]
Corollary~\ref{cor:critical} gives~\eqref{eq:critical-main}. We prove moment propagation by increasing
the target order $k$.

For $2<k\le3$, we have $s(k)\le2$. Interpolating the $L^2$ density bound with the mass bound and
applying H\"older in time gives~\eqref{eq:density-condition}. Lemma~\ref{lem:single-moment} therefore
bounds $M_k$.

Next suppose $3<k\le5$. The initial third moment is finite, so the preceding case bounds $M_3$.
Lemma~\ref{lem:electric-source} then gives $\rho\in L^{22/9}_{t,x}$. Since

\[
 s(k)\le s(5)=\frac{24}{11}<\frac{22}{9},
\]
interpolation with the mass bound again yields~\eqref{eq:density-condition}, and
Lemma~\ref{lem:single-moment} propagates the target moment.

Finally, let $k>5$. The preceding cases propagate $M_5$, and Lemma~\ref{lem:electric-source} yields
$\rho\in L^{10/3}_{t,x}$. By~\eqref{eq:field-large-p}, this also gives~\eqref{eq:field-main}. Since
$s(k)<3<10/3$, interpolation with mass and H\"older in time imply~\eqref{eq:density-condition}. Hence
Lemma~\ref{lem:single-moment} propagates $M_k$. Each initial moment used in this
argument has order at most $k$.

Having propagated $M_k$ in all three cases, apply Lemma~\ref{lem:electric-source} once more, now with
$m=k$. Its first assertion applies for $k\le5$. For $k>5$, use the time-integrable bounded field just
obtained. The resulting density exponent is

\[
 2+\frac{4(k-2)}9=\frac{4k+10}{9}.
\]

When $k=5$, the same application of~\eqref{eq:field-large-p} proves~\eqref{eq:field-main}.
\end{proof}

\section{Global classical solutions}\label{sec:classical}

We now apply the density and moment estimates to maximal local classical solutions. 

\subsection{Velocity support}
We use the following potential estimates, with the second needed only for the Cauchy problem:
\begin{equation}\label{eq:field-split}
 \begin{aligned}
  \|E(t)\|_\infty
  &\le C\left(\|f_0\|_1+\|\rho(t)\|_2^{2/3}\|\rho(t)\|_\infty^{1/3}\right),\\
  \|\nabla E(t)\|_\infty
  &\le C\bigl(1+\|f_0\|_1+\|\rho(t)\|_\infty\bigr)
       \bigl[1+\log(e+\|\nabla\rho(t)\|_\infty)\bigr].
 \end{aligned}
\end{equation}
The first follows in both domains by splitting the Green integral in~\eqref{eq:green}
into near and far regions. The second follows from a decomposition of the differentiated
Coulomb kernel, using cancellation near the diagonal; see~\cite[Lemma P1]{Rein}.

For a classical solution with $\nu=0$, define
\[
 \cV(t)=1+\sup_{0\le s\le t}\sup\{|v|:(x,v)\in\supp f(s)\},
\]
Since $\rho(t,x)\le C\|f(t)\|_\infty\cV(t)^3$, it follows that
\begin{equation}\label{eq:linear-field}
 \|E(t)\|_\infty\le C_{\Tb}\left(1+\|\rho(t)\|_2^{2/3}\right)\cV(t).
\end{equation}
Along a characteristic $(X(t),V(t))$, where $V(t)$ denotes the particle velocity,
$V\cdot\cG=0$ and $\lambda\ge0$ give $\dd|V|/\dd t\le|E(t,X)|$.
Since specular reflection preserves speed, the same inequality holds for both $\mathcal O=\mathbb R^3$ and $\mathcal O=\Omega$. Combining it
with~\eqref{eq:linear-field}, Gr\"onwall's inequality, and H\"older in time gives
\begin{equation}\label{eq:support}
 \cV(T)\le \cV(0)\exp\left\{C_{\Tb}T+
 C_{\Tb}T^{2/3}\left(\int_0^T\|\rho(t)\|_2^2\dd t\right)^{1/3}\right\}.
\end{equation}

\begin{proof}[Proof of Theorem~\ref{thm:classical}\textup{(i)}] \emph{Step 1. Local solvability.}\enspace
The characteristic construction for local Vlasov--Poisson solutions~\cite{Rein} applies with the
prescribed $C^1$ drift $\cG-\lambda v$. Let $[0,T_*)$ be the maximal interval of existence. If the
velocity support is bounded on a finite interval, then trajectories issued from compact initial support
remain in a compact subset of phase space. The first derivatives of the prescribed drift are
consequently bounded on that set.

\smallskip\noindent\emph{Step 2. Continuation under a velocity bound.}\enspace
Assume that the velocity support is bounded. Then
$\|\nabla\rho\|_\infty\le C\cV(T)^3\|\nabla_{x,v}f\|_\infty$ for $t<T$.
Differentiating the transport equation and using the second inequality
in~\eqref{eq:field-split}, we obtain

\[
 D^+Z\le C_{\Tb,\cV(T)}Z\log Z,\qquad
 Z=e+\|\nabla_{x,v}f\|_\infty,
\]
where $D^+$ denotes the upper right Dini derivative. This inequality bounds $Z$ on finite intervals.
Hence the local solution can be continued as long as its velocity support remains bounded.

\smallskip\noindent\emph{Step 3. Global continuation.}\enspace
Lemma~\ref{lem:energy} and Corollary~\ref{cor:critical} bound the right-hand side of~\eqref{eq:support}
uniformly for $0<T<\min\{T_*,\Tb\}$. Step~2 therefore excludes a finite maximal lifespan. Local
uniqueness extends to every finite interval, and differentiation with elliptic regularity propagates
higher regularity. In the whole space, the velocity bound also controls the spatial support.
\end{proof}

\subsection{Velocity diffusion}
For $\nu>0$, polynomially weighted uniform estimates replace the velocity-support bound. We first prove
the weighted estimate, which is also valid at $\nu=0$.

\begin{lemma}[Polynomial velocity bounds]\label{lem:polynomial}
Let $f$ be a classical solution satisfying the kinetic maximum principle, and let $\ell\ge0$ with
$\vav^\ell f_0\in L^\infty$. If $E\in L^1(0,T;L^\infty)$, then
\begin{equation}\label{eq:polynomial}
 \|\vav^\ell f(t)\|_\infty
 \le\|\vav^\ell f_0\|_\infty
 \exp\left\{C_\ell(\nu+\lambda)t
                 +C_\ell\int_0^t\|E(s)\|_\infty\dd s\right\}.
\end{equation}
The estimate applies to every force satisfying~\eqref{eq:gyro}, with specular reflection when a
boundary is present.
\end{lemma}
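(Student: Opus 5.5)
Set $g=\vav^\ell f$ and derive the equation it satisfies. Since $\nabla_v\cdot\cG=0$, the kinetic equation can be written as
\[
\partial_t f+v\cdot\nabla_x f+(E+\cG-\lambda v)\cdot\nabla_v f-3\lambda f=\nu\Delta_v f .
\]
Substituting $f=\vav^{-\ell}g$ and using $\cG\cdot\nabla_v\vav^{-\ell}=0$ gives
\[
\partial_t g+v\cdot\nabla_x g+(E+\cG-\lambda v)\cdot\nabla_v g-\nu\Delta_v g
=\Bigl[3\lambda-\ell\lambda\tfrac{|v|^2}{\vav^2}-\ell\tfrac{E\cdot v}{\vav^2}\Bigr]g
-2\nu\nabla_v\log\vav^{-\ell}\cdot\nabla_v g\cdot(\ldots)+\nu\,c_\ell(v)g .
\]
To keep the algebra clean, write this as $Lg=-2\nu\,\nabla_v(\log w)\cdot\nabla_v g+h\,g$, where $w=\vav^{-\ell}$ and
\[
h=3\lambda+\ell\lambda\frac{|v|^2}{\vav^2}\cdot(-1)-\ell\frac{E\cdot v}{\vav^2}+\nu\Bigl(\frac{\Delta_v w}{w}-2\frac{|\nabla_v w|^2}{w^2}\Bigr)\cdot(-1)\ \ldots
\]
The exact signs do not matter. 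What matters is the bound $|h|\le C_\ell(\nu+\lambda)+C_\ell|E|$. This follows from $|\nabla_v w|/w\le \ell\vav^{-1}$, $|\Delta_v w|/w\le C_\ell\vav^{-2}$ and $|v|/\vav^2\le1$; the added gradient drift term is harmless.

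Next, set $\tilde g=g\exp\{-C_\ell(\nu+\lambda)t-C_\ell\int_0^t\|E\|_\infty\}$. This $\tilde g$ is a subsolution of a linear drift--diffusion operator with no zeroth-order term, since the transport field is still $v\cdot\nabla_x+(E+\cG-\lambda v-2\nu\nabla\log w)\cdot\nabla_v$. The kinetic maximum principle assumed in the statement then gives $\|\tilde g(t)\|_\infty\le\|g(0)\|_\infty$, which is \eqref{eq:polynomial}. For $\nu=0$ the same argument is simply the characteristic computation $\frac{\dd}{\dd t}g(t,X,V)=h\,g$ along trajectories.

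The boundary needs separate attention. Under specular reflection, $\vav^\ell$ is invariant under $R_x$, so $g$ satisfies the same reflection condition \eqref{eq:reflection}. A maximum of $\tilde g$ at a boundary point is therefore propagated along reflected characteristics with unchanged speed; for $\nu>0$, Remark~\ref{rem:diffusion-trace} shows that $\Delta_v$ commutes with reflection. This is exactly the setting in which the maximum principle hypothesis is invoked, so no boundary term appears.

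The main obstacle is a rigorous application of the maximum principle on the unbounded velocity space when $\nu>0$. I would handle it by the standard device of comparing with $\tilde g-\varepsilon\vav^{2}e^{Kt}$. The hypothesis that $f$ is classical with the required decay makes the supremum attained. Letting $\varepsilon\to0$ then yields the estimate.
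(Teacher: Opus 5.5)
Your proposal is correct and follows essentially the paper's proof. You conjugate by the radial weight and use $\cG\cdot\nabla_v\vav^{\ell}=0$, together with the boundedness of $\nabla_v\log\vav^{\ell}$ and of $\Delta_v$ of the weight divided by the weight, to bound the zeroth-order coefficient by $C_\ell(\nu+\lambda+|E|)$; you then note that $g$ inherits the specular condition and invoke the assumed maximum principle. The sign slips in your intermediate formulas are harmless, since only $|h|$ enters. Your barrier $\tilde g-\varepsilon\vav^{2}e^{Kt}$ stands in for the paper's radial truncation of the unbounded weight and is equally standard. It does presuppose that $\vav^\ell f$ grows like $o(\vav^2)$, and on $\R^3_x$ it also needs spatial decay or an added $|x|^2$ in the barrier.
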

\begin{proof}
Set $w=\vav^\ell$, $a=\nabla_v\log w$, and $g=wf$. Conjugating the kinetic equation by $w$ gives
\begin{equation*}
\begin{split}
 &\partial_tg+v\cdot\nabla_xg
 +(E+\cG-\lambda v+2\nu a)\cdot\nabla_vg-\nu\Delta_vg\\
 &\qquad=\bigl[3\lambda+(E+\cG-\lambda v)\cdot a
                +\nu(|a|^2-\operatorname{div}_va)\bigr]g.
\end{split}
\end{equation*}
The functions $a$ and $\nabla_va$ are bounded. Moreover, $\cG\cdot a=0$ and $-\lambda v\cdot a\le0$, so
the coefficient on the right is at most $C_\ell(\nu+\lambda+|E|)$. Since $w$ is radial, $g$ satisfies
the same specular condition as $f$. The kinetic maximum
principle therefore gives~\eqref{eq:polynomial}. The unbounded weight is justified by radial truncation,
or, for the local smooth solutions below, by applying the principle on bounded phase regions and passing
to the limit.
\end{proof}

\begin{proof}[Proof of Theorem~\ref{thm:classical}\textup{(ii)}] \emph{Step 1. Local construction.}\enspace
Local smooth solutions are obtained by iterating the Poisson field in polynomially weighted classical
norms, as in~\cite{Bouchut93,Bouchut95}. The additional drift $A(t)v$ is smooth and grows at most
linearly in velocity. We estimate the norms needed to repeat this construction at any finite time.

\smallskip\noindent\emph{Step 2. Density and field bounds.}\enspace
Lemma~\ref{lem:energy} controls $\|f\|_1$, $\|f\|_{\infty}$, and second moment on each local interval. Applying
Theorem~\ref{thm:moments} with $k=5$ therefore gives
\begin{equation}\label{eq:diffusive-input}
 \sup_{t<T}M_5(t)+\int_0^T\|\rho(t)\|_{10/3}^{10/3}\dd t
                   +\int_0^T\|E(t)\|_\infty\dd t\le C_{\Tb,M_5(0)}.
\end{equation}
The field integral in this estimate allows us to apply Lemma~\ref{lem:polynomial}. For any fixed
$\ell>3$, it follows that
\begin{equation}\label{eq:density-infinity}
 \sup_{t<T}\|\vav^\ell f(t)\|_\infty\le C_{\Tb,\ell},
 \qquad \sup_{t<T}\|\rho(t)\|_\infty\le C_{\Tb,\ell}.
\end{equation}
Together with the mass bound,~\eqref{eq:green} then bounds $E$ uniformly in time and space.

\smallskip\noindent\emph{Step 3. First derivatives.}\enspace
With the density and field now bounded, define
\[
 Z_1=e+\sum_{|\beta|\le1}\|\vav^\ell D_{x,v}^\beta f\|_\infty.
\]
Differentiate the kinetic equation, conjugate by the radial weight, and apply the maximum principle to a
smooth approximation of the norm of the derivative vector. The derivative of $A(t)v$ contributes the
bounded matrix $A(t)$, while the commutator with $v\cdot\nabla_x$ replaces a velocity derivative by a
spatial derivative of the same order. Consequently,
\begin{equation*}
 D^+Z_1\le C_\ell\bigl(1+\nu+\lambda+\|A(t)\|+\|E(t)\|_\infty
                                    +\|\nabla E(t)\|_\infty\bigr)Z_1.
\end{equation*}
Since $\|\nabla\rho\|_\infty\le C_\ell Z_1$, the second inequality in~\eqref{eq:field-split} and the
density bound~\eqref{eq:density-infinity} give $D^+Z_1\le C_{\Tb,A,f_0,\ell}Z_1\log Z_1$. Thus all weighted
first derivatives remain bounded on finite intervals.

\smallskip\noindent\emph{Step 4. Higher derivatives and polynomial decay.}\enspace
For $j\ge1$, the same kernel decomposition yields
\begin{equation*}
 \|D_x^jE\|_\infty
 \le C_j\|f_0\|_1+C_j(1+\|D_x^{j-1}\rho\|_\infty)
                      \bigl[1+\log(e+\|D_x^j\rho\|_\infty)\bigr].
\end{equation*}

Indeed, near the diagonal the first-derivative kernel acts on $D_x^{j-1}\rho$ with cancellation, while
away from the diagonal the derivatives fall on the smooth cutoff kernel and the integral is controlled
by mass.

Now let $j\ge2$ and assume that all lower-order weighted derivatives have been bounded. In the equation
for $D_{x,v}^jf$, the highest field derivative $D_x^jE$ is multiplied by a first velocity derivative of
$f$. The remaining terms are either linear in the order-$j$ norm or contain only lower-order
derivatives. Hence

\[
 D^+Z_j\le C_{\Tb,j,\ell}\bigl(1+Z_j+\log(e+Z_j)\bigr),\qquad
 Z_j=e+\sum_{|\beta|\le j}\|\vav^\ell D_{x,v}^\beta f\|_\infty.
\]
This inequality bounds $Z_j$, completing the induction.

To propagate spatial decay on $\R^6$, use the joint weight $w=(1+|x|^2+|v|^2)^N$. Its logarithmic
derivatives and $v\cdot\nabla_x\log w$ are bounded, while skew-symmetry gives $A(t)v\cdot\nabla_vw=0$.
The weighted equations therefore close in the same order, since the field derivatives have already been
bounded. This controls every fixed polynomially weighted derivative.

\smallskip\noindent\emph{Step 5. A uniform restart time.}\enspace
The local iteration requires only finitely many of these norms: a sufficiently large polynomial weight
and three bounded derivatives with a joint spatial--velocity weight. On a common short
interval, solve successive linear kinetic equations with the preceding Poisson field. The Poisson map is
Lipschitz from the corresponding weighted uniform norm to $L^\infty_x$; hence the difference equation is
contractive in a weaker weighted uniform norm after shortening the interval. The derivative bounds and
interpolation then give a classical limit.

Steps~2--4 give a common lifespan for restarting at $t_0<T_*$.
Taking $t_0\uparrow T_*$ excludes $T_*<\infty$.

\smallskip\noindent\emph{Step 6. Uniqueness and smoothness.}\enspace
Let $\delta f=f_1-f_2$ be the difference of two solutions with the same initial datum. Its equation
gives

\[
 \frac{\dd}{\dd t}\|\delta f\|_1\le\int|E_1-E_2|(x)q_2(x)\dd x,
 \qquad q_2(x)=\int|\nabla_v f_2(x,v)|\dd v.
\]
Diffusion contributes nonpositively. Friction and the prescribed drift are in divergence form and hence
give no $L^1$ growth. Splitting the kernel as in~\eqref{eq:green} and applying Fubini bounds the
remaining term by
\[
 C\bigl(\|q_2\|_\infty+\|q_2\|_1\bigr)\|\delta f\|_1.
\]
The weighted derivative estimates bound this coefficient on every finite interval. Gr\"onwall's
inequality gives uniqueness, and the differentiated equations yield the asserted smoothness.
\end{proof}

\begin{remark}[Spatially varying gyroscopic forces]\label{rem:diffusive-scope}
The form
$\cG=A(t)v$ also preserves the velocity weight under differentiation. By contrast, when $\cG$ depends on
$x$, the commutator $\partial_x\cG\cdot\nabla_vf$ may require an additional velocity weight. This is the
distinction between the force class in the diffusive classical theorem and the broader class in
Theorem~\ref{thm:moments}.
\end{remark}

\subsection{The specular--Dirichlet problem in convex domains}\label{subsec:convex}
Theorem~\ref{thm:boundary} uses the compatibility and grazing-flatness conditions
of~\cite[Theorem 2.1]{Cesbron}. We combine that local theory with the velocity bound just proved.

\begin{proof}[Proof of Theorem~\ref{thm:boundary}] \emph{Step 1. Uniform a priori bounds.}\enspace
Lemma~\ref{lem:energy} and Corollary~\ref{cor:critical}
control $\|f\|_1$, $\|f\|_{\infty}$, energy, and $L^2$ density norm on every finite interval. Since reflection
preserves speed,~\eqref{eq:support} bounds the velocity support uniformly for $0<T<\min\{T_*,\Tb\}$.

\smallskip\noindent\emph{Step 2. Boundary regularity and continuation.}\enspace
The local construction and regularity estimates of~\cite[Sections 4.1--4.2]{Cesbron} control the
classical norms and transport the flatness near grazing under a velocity bound. The required outward
normal force follows from the Hopf lemma. To make this sign uniform on a finite interval, note that the
velocity and $\|f\|_{\infty}$ bounds give a uniform bound for $\rho$. A sufficiently thin boundary layer
therefore contains less than half of the conserved mass. On the remaining compact interior set, the
Dirichlet Poisson kernel has a uniform positive lower bound. Its integral against $\rho$ gives a
uniform positive lower bound for $-\partial_nU$ on $\partial\Omega$.

The local construction consequently has a common restart time within the admissible class. Together
with Step~1, this excludes a finite maximal lifespan. Local uniqueness extends globally, and
Theorem~\ref{thm:moments} supplies the moment and density bounds.
\end{proof}

\begin{remark}[Velocity diffusion with specular--Dirichlet conditions]\label{rem:diffusive-boundary}
Theorem~\ref{thm:moments} also applies when $\nu>0$. If $M_5(0)<\infty$, it yields $E\in L^1(0,T;L^\infty(\Omega))$. Moreover, if $\langle v\rangle^\ell f_0\in L^\infty$ for some $\ell>3$, Lemma~\ref{lem:polynomial} gives a uniform bound on $\rho$ for classical solutions satisfying the kinetic maximum principle. Classical continuation additionally requires derivative estimates near the grazing set; see~\cite{ZhuKFP} for related results on linear kinetic Fokker--Planck equations.
\end{remark}

\end{document}